\documentclass[a4paper,11pt,reqno]{amsart}
\usepackage[latin1]{inputenc}
\usepackage[T1]{fontenc}
\usepackage[english]{babel}
\def\uppercasenonmath#1{} 
\usepackage[margin={25mm,30mm}]{geometry}
\usepackage{calc}
\usepackage{pgfplots}
\usetikzlibrary{shapes.multipart}
\pgfplotsset{compat=1.17}
\usepackage{tkz-euclide}
\newcommand\circled[1]{\tikz[baseline=(char.base)]{%
\node[shape=circle,draw,inner sep=2pt] (char) {#1};}}
\usepackage{enumitem}
\usepackage{amsmath}  %
\usepackage{amsthm}   
\usepackage{amssymb}  
\usepackage{amsaddr}  
\numberwithin{equation}{section}
\theoremstyle{plain}
\newtheorem{theorem}{Theorem}[section]
\newtheorem*{theorem*}{Theorem}
\newtheorem{lemma}[theorem]{Lemma}
\newtheorem{proposition}[theorem]{Proposition}
\newtheorem{corollary}[theorem]{Corollary}
\theoremstyle{definition} 
\newtheorem{definition}[theorem]{Definition}
\newtheorem{remark}[theorem]{Remark}
\newtheorem{example}[theorem]{Example}
\usepackage{xcolor}
\newcommand{\bp}{\mathbf{p}}
\newcommand{\bq}{\mathbf{q}}
\newcommand{\cA}{\mathcal{A}} 
\newcommand{\cB}{\mathcal{B}} 
\newcommand{\cF}{\mathcal{F}} 
\newcommand{\cN}{\mathcal{N}} 
\newcommand{\dd}{\mathrm{d}} 
\newcommand{\N}{\mathbb{N}} 
\newcommand{\R}{\mathbb{R}} 
\DeclareMathOperator\aff{aff} 
\DeclareMathOperator\conv{co} 
\DeclareMathOperator\env{env} 
\DeclareMathOperator{\ext}{ext} 
\newcommand\ait{\mathrm{a.i.}}
\newcommand\extt{\mathrm{ext}}
\newcommand\fint{\mathrm{fin}}
\usepackage[colorlinks=true,urlcolor=black,citecolor=black,linkcolor=black]{hyperref}
\begin{document}
\title{Extreme points and faces in the moment problem}
\author[Didier Henrion \and Martin Kru\v{z}\'ik \and Stephan Weis]%
{Didier Henrion$^{\dagger,\ddagger}$, 
Martin Kru\v{z}\'ik$^{\S,\#}$, 
and Stephan Weis$^\dagger$}
\address{%
$^\dagger$Faculty of Electrical Engineering,
Czech Technical University in Prague, Czechia\\
$^\ddagger$LAAS-CNRS, 
University of Toulouse, France\\
$^\S$Institute of Information Theory and Automation,
Czech Academy of Sciences, Prague, Czechia\\
$^\#$Faculty of Civil Engineering, 
Czech Technical University in Prague, Czechia}
\email{henrion@laas.fr, 
kruzik@utia.cas.cz,
\href{mailto:maths@weis-stephan.de}{maths@weis-stephan.de}}
\thanks{CRediT: D.~Henrion and M.~Kru\v{z}\'{i}k contributed to 
Funding acquisition, Project administration, Conceptualization, Supervision, 
Writing - Review \& Editing;
S.~Weis is accountable for Methodology, Investigation, Formal Analysis, Visualization, 
and Writing - Original Draft}
\date{Draft of \today}
\subjclass[2020]{52A05, 46E27, 46N10, 44A60, 74B20}
\keywords{convex set, 
extreme point, 
affine constraints,
face,
smallest face, 
convex geometry of the probability simplex,
generalized moment problem,
generalized convex envelope,
polyconvex envelope}
%
%
%
%
%
%
\begin{abstract}
The polyconvex envelope, used in the calculus of variations and elasticity theory, was 
expressed by Dacorogna pointwise as a linear program on finitely atomic measures on the 
space of $m\times n$ matrices. Weizs\"acker and Winkler proved that the corresponding 
linear program on Borel measures restricts to the extreme points without increasing the 
infimum. Combining the two, one obtains a speed-up of grid-based algorithms and a new 
proof that the polyconvex envelope can be computed by the moment sum-of-squares 
hierarchy. Motivated by these applications, we seize the essence of extreme points in 
moment problems. First, we characterize extreme points of an affinely constrained convex 
set by the injectivity of the constraint map on the smallest faces containing them. We 
then study finitely many moment constraints. The extreme points are finitely atomic 
measures that have an affine independence property, under natural assumptions. We 
retrieve this known result with a simplified proof and apply it to faces of the probability 
simplex, among them the face of Radon measures. In the converse, we find that the 
assumption of a simplex is redundant. The Richter-Tchakaloff theorem allows us to show 
that the infimum of an integral functional restricts to the extreme points without 
increasing the infimum, not just for the known case of Radon measures but for any convex 
set of probability measures that contains the point measures.
\end{abstract}
\maketitle
%
%
%
\section{Introduction}
Many difficult nonlinear optimization problems can be approximated or solved 
in terms of a generalized moment problem 
\cite{henrion-etal20,klerk-laurent19,lasserre10,shapiro01}, by minimizing an 
integral functional on an infinite-dimensional convex set $Q$ of measures under 
finitely many moment constraints. The corresponding feasibility problem is the 
classical moment problem \cite{schmuedgen17}.
\par
Owing to Weizs\"acker and Winkler's integral representation of Radon measures 
\cite{weizsaecker-winkler79}, the generalized moment problem on the simplex $Q=R$ 
of Radon probability measures restricts to the extreme points of the constraint 
set without increasing the infimum. The extreme points are finitely atomic 
measures that have an affine independence property \cite{weizsaecker-winkler80,winkler88}. 
They are concentrated on finite sets of sizes at most $d+1$ if $d<\infty$ many 
moment constraints are in place. 
\par
In this paper, we analyze extreme points in the generalized moment problem using 
facial structures. By definition, a face of a convex set is a convex subset that 
absorbs every closed segment from the convex set whose interior it intersects. An 
extreme point is a one-point face, that is to say, it does not lie in the interior 
of any segment in the convex set. Faces of the convex set $P$ of probability
measures have been studied before by Dubins \cite{dubins62}, Winkler 
\cite{winkler88,winkler2000}, Aliprantis and Border \cite{aliprantis-border06},
and more recently in  \cite{weis25,weis-shirokov21} and \cite{dufour-prieto-rumeau25}.
We exploit that $P$ is a simplex in the sense that the convex cone it generates is 
a lattice, as Yosida and Hewitt \cite{yosida-hewitt52} have shown. 
\par
We begin with affine constraints on an arbitrary convex set $K$ in the 
Sections~\ref{sec:main-result} and~\ref{sec:finite}. We find that the extreme points 
of the constraint set are distinguished by the injectivity of the constraint map on 
the smallest faces of $K$ containing them (Theorem~\ref{thm:ext}). These minimal 
faces have been studied before by Dubins \cite{dubins62} and Alfsen \cite{alfsen71},  
and more recently in the works 
\cite{garcia-pacheco20,gorokhovik25,millan-roshchina23,weis25,weis-shirokov21}. 
In Section~\ref{sec:finite} we retrieve the known characterization of extreme points 
under finitely many affine constraints \cite{weizsaecker-winkler80,winkler88}. Notably, 
we use the aforementioned injectivity criterion to simplify the proof that the extreme 
points are convex combinations of extreme points of $K$ that have an affine 
independence property.
\par
Section~\ref{sec:problem-of-moments} is concerned with moment constraints defined by
finitely many integral functionals on a convex subset $Q$ of the probability simplex $P$, 
for an arbitrary measurable space.
\par
Section~\ref{sec:ex-moment} improves known results on extreme points 
\cite{weizsaecker-winkler80,winkler88}. We show that the assumption that $Q$ is a simplex 
is redundant in the sufficient condition for points to be extreme points. In the converse,
necessary condition, that extreme points are finitely atomic measures that have an affine
independence property, we explore the assumption that $Q$ is a face of $P$ and we provide 
a criterion to identify faces. In particular, the above necessary condition is true for 
the faces $R$ of Radon and $P_\tau$ of $\tau$-smooth measures of the simplex of Borel 
probability measures. 
\par
Section~\ref{sec:measure-lp} is devoted to linear optimization. We discuss Weizs\"acker 
and Winkler's proof that restricting the generalized moment problem to the extreme points 
of the constraint set preserves the infimum if $Q=R$. The analogue is true more generally 
for any convex subset $Q\subset P$ that contains all point measures. Our proof of this 
new result employs the Richter-Tchakaloff theorem (consistent moment constraints admit a 
finitely atomic solution) \cite{schmuedgen17,shapiro01}, Bauer's theorem \cite{bauer58} 
in its simplest setting (every linear functional on a finite-dimensional compact set 
attains a minimum at an extreme point), and facial geometry. The characterization of 
extreme points plays an active role in our proof, whereas Weizs\"{a}cker and Winkler's 
proof for $Q=R$ is independent of it.
\par
We shed light on generalized convex envelopes in Section~\ref{sec:Dacorogna-MP}, 
proving they are optimal value functions of generalized moment problems. An 
example is the polyconvex envelope in nonlinear elasticity theory 
\cite{ball76,ciarlet21,kruzik-roubicek19}, which is the optimal value function of 
various generalized moment problems under affine constraints on the minors of a 
matrix. The polyconvex envelope matches the optimal value on finitely atomic 
measures \cite{dacorogna08} and on Borel measures, see \cite{fantuzzi-etal26} and 
the references therein. We show that it also matches the optimal value on the 
extreme points and on the finitely atomic measures that have the affine 
independence property.
\par
The concluding Section~\ref{sec:conclusion} mentions numerical approaches to the
polyconvex envelope: grid-based methods \cite{bartels05} and their speed-up enabled 
by the affine independence property, as well as the mesh-free method 
of the moment sum-of-squares hierarchy in polynomial optimization 
\cite{fantuzzi-etal26}. We also mention extreme points under infinitely many moment 
constraints, and, as an example beyond simplices, extreme points of 
energy-constrained quantum states.
\par
\subsection*{Contribution}
\begin{itemize}
\item 
proving a new injectivity criterion that characterizes extreme points under affine 
constraints on convex sets (Theorem~\ref{thm:ext})
\item 
simplifying the proof of the known characterization of extreme points for finitely 
many affine constraints (Remark~\ref{rem:our-contribution-finite})
\item 
removing the redundant assumption of a simplex from the known sufficient condition 
for points to be extreme points under moment constraints, applying the necessary 
condition to faces of the probability simplex, and characterizing such faces
(Remark~\ref{rem:ext-moment-contribution})
\item 
proving that the infimum is preserved when the generalized moment problem is 
restricted to the extreme points, thereby generalizing the known case of Radon measures to 
arbitrary convex sets of probability measures that contain the point measures 
(Remark~\ref{rem:our-contrib-opti})
\item 
proving that generalized convex envelopes are optimal value functions of generalized moment 
problems, obtaining new representations for the polyconvex envelope 
(Remark~\ref{rem:envelopes})
\end{itemize}   
%
%
\section{Extreme points under affine constraints}
\label{sec:main-result}
We present a novel characterization of extreme points under affine constraints 
using faces of convex sets. We provide self-contained proofs, and we review  
terminology and related works.
\par
Let $K$ and $C$ be convex sets in real vector spaces $V$ and $W$\!, respectively.
Let $\alpha\colon K\to W$ be an affine map and  
\begin{equation}\label{eq:constraint-set}
H:=\alpha^{-1}(C)
=\{x\in K : \alpha(x)\in C\}.
\end{equation}
The map $\alpha$ can be seen as a \emph{constraint} or \emph{constraint function} 
and the preimage $H$ as a \emph{feasible set} or \emph{constraint set} in 
optimization theory \cite{ben-tal-nemirovski01,jahn07}.
\par
An \emph{extreme set} of $K$ is a subset of $K$ that contains 
any two points $x,y\in K$ whenever it contains their convex combination 
$(1-\lambda)x + \lambda y$ for some $0<\lambda< 1$. A point 
$x\in K$ is an \emph{extreme point} of $K$ if $\{x\}$ is an extreme 
set of $K$. We denote the set of extreme points of $K$ by $\ext K$. 
A \emph{face} of $K$ is a convex extreme set of $K$. By $F_K(x)$ we denote the 
intersection of all faces of $K$ that contain a given point $x\in K$. 
\par
\begin{remark}\label{rem:extreme}
We adhere to Fremlin and Pryce' definition of an extreme set of a 
convex set \cite{fremlin-pryce74}, see also \cite{holmes75}. This
notion is used in Krein-Milman's theorem \cite{rudin91} and a similar 
concept appears in Bauer's maximum principle \cite{bauer58}, see 
\cite[pp.~110--113]{phelps01}. The notion of a face is rather standard
\cite{alfsen71,alfsen-shultz01,fremlin-pryce74,phelps01,rockafellar70,schneider14}.
The set $F_K(x)$ is the smallest face of $K$ that contains a given point $x\in K$ 
(in the partial order of inclusion) studied in \cite{dubins62} and 
\cite[Section~II.5]{alfsen71}, see also 
\cite{garcia-pacheco20,gorokhovik25,millan-roshchina23,weis25,weis-shirokov21}.
The definition of a face in \cite{dubins62} differs from the one presented here, 
see \cite[Section~8]{weis25} for details.
\end{remark}
Proposition~\ref{pro:Alfsen} is quoted from \cite[p.~121]{alfsen71}. 
Its proof is taken from \cite[Proposition~4.1]{weis25}. Let
\[
S_K(x):=\left\{ y\in K \mid \exists\epsilon>0\colon x+\epsilon(x-y)\in K \right\},
\quad x\in K.
\]
\par
\begin{proposition}[Alfsen]\label{pro:Alfsen}
For every $x\in K$ we have $F_K(x)=S_K(x)$.
\end{proposition}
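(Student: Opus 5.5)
We prove the two inclusions separately. The set $S_K(x)$ consists of the points $y\in K$ such that the segment from $y$ through $x$ can be prolonged slightly beyond $x$ inside $K$. So $x$ lies in the relative interior of a segment in $K$ with endpoint $y$, or $y=x$.

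\emph{Step 1: $S_K(x)\subset F_K(x)$.} Let $y\in S_K(x)$ with $\epsilon>0$ such that $z:=x+\epsilon(x-y)\in K$. Then
\[
x=\tfrac{1}{1+\epsilon}\,z+\tfrac{\epsilon}{1+\epsilon}\,y
\]
is a convex combination with coefficient $\lambda=\tfrac{\epsilon}{1+\epsilon}\in(0,1)$. Let $F$ be any face of $K$ containing $x$. Since $F$ is an extreme set, it contains both $y$ and $z$. Hence $y\in F$, and intersecting over all such faces gives $y\in F_K(x)$.

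\emph{Step 2: $F_K(x)\subset S_K(x)$.} We show that $S_K(x)$ is itself a face of $K$ containing $x$; minimality of $F_K(x)$ then gives the inclusion. First, $x\in S_K(x)$: take $y=x$, so $x+\epsilon(x-x)=x\in K$.

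For convexity, take $y_1,y_2\in S_K(x)$ with $\epsilon_1,\epsilon_2$, and let $\epsilon:=\min(\epsilon_1,\epsilon_2)$. Convexity of $K$ shows that $x+\epsilon(x-y_i)$ lies on the segment $[x,\,x+\epsilon_i(x-y_i)]\subset K$. For $y=(1-t)y_1+ty_2$ we then have
\[
x+\epsilon(x-y)=(1-t)\bigl(x+\epsilon(x-y_1)\bigr)+t\bigl(x+\epsilon(x-y_2)\bigr)\in K.
\]

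\emph{Step 3: $S_K(x)$ is an extreme set.} Let $u,v\in K$ and $0<\lambda<1$ with $y=(1-\lambda)u+\lambda v\in S_K(x)$, witnessed by $z=x+\epsilon(x-y)\in K$. We must find $\delta>0$ with $x+\delta(x-u)\in K$, and likewise for $v$. Consider the points $z$ and $v$. Write $x+\delta(x-u)$ as a convex combination $a x + b z + c v$ with nonnegative coefficients summing to $1$. Substituting $u=(y-\lambda v)/(1-\lambda)$ and $y=x+(x-z)/\epsilon$, we get
\[
x+\delta(x-u)=x+\tfrac{\delta}{1-\lambda}(z-x)/\epsilon\cdot\epsilon^{0}\ldots
\]
and, cleaning up,
\[
x-u=\tfrac{1}{1-\lambda}\Bigl(\tfrac{z-x}{\epsilon}+\lambda(v-x)\Bigr)\cdot(-1)\cdot(-1).
\]
Precisely, $x-u=\tfrac{1}{1-\lambda}\bigl(\tfrac{1}{\epsilon}(z-x)+\lambda(v-x)\bigr)$. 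Hence
\[
x+\delta(x-u)=\Bigl(1-\tfrac{\delta}{(1-\lambda)\epsilon}-\tfrac{\delta\lambda}{1-\lambda}\Bigr)x+\tfrac{\delta}{(1-\lambda)\epsilon}\,z+\tfrac{\delta\lambda}{1-\lambda}\,v .
\]
For small $\delta>0$ all three coefficients are nonnegative and they sum to $1$, so the point lies in $K$ by convexity. Thus $u\in S_K(x)$, and $v$ follows by symmetry.

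The only real obstacle is this last computation, namely choosing the right three points $x,z,v$ to express the prolongation as a convex combination. Everything else is direct from the definitions.
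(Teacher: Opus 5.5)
Your proof is correct and essentially the paper's. Both arguments show that every face containing $x$ absorbs $S_K(x)$ and that $S_K(x)$ is itself a face. Your three-point combination of $x,z,v$ is the paper's two-point combination of $y'$ and $b$ with $\delta$ left free: taking the largest admissible value $\delta=\epsilon(1-\lambda)/(1+\epsilon\lambda)$ makes the coefficient of $x$ vanish and recovers the paper's $\epsilon_a$ and $\lambda_a$.

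Delete the two garbled displays in Step 3, the ones ending in $\epsilon^{0}\ldots$ and $\cdot(-1)\cdot(-1)$. The ``Precisely'' identity after them is correct and is all the argument needs.
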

\begin{proof}
It suffices to prove that $S_K(x)$ is a face of $K$, as $F_K(x)\supset S_K(x)$ 
is implied by $F_K(x)$ being an extreme set of $K$ containing $x$. Let 
$a,b,y\in K$ and $\lambda\in(0,1)$ satisfy $y=(1-\lambda)a+\lambda b$.
\par
\begin{itemize}
\item 
$S_K(x)$ is convex. If $a,b\in S_K(x)$, then there is $\epsilon>0$ such that 
$a':=x+\epsilon(x-a)$ and $b':=x+\epsilon(x-b)$ lie in $K$. Then 
$(1-\lambda)a'+\lambda b'=x+\epsilon(x-y)$ shows $y\in S_K(x)$.
\item
$S_K(x)$ is an extreme set. If $y\in S_K(x)$, then there is $\epsilon>0$ 
such that $y':=x+\epsilon(x-y)$ lies in $K$. Let
$\lambda_a:=\epsilon \lambda/(1+\epsilon \lambda)$ and 
$\epsilon_a:=\epsilon(1-\lambda)/(1+\epsilon \lambda)$. Then 
$(1-\lambda_a)y'+\lambda_a b=x+\epsilon_a(x-a)$ shows $a\in S_K(x)$. 
Similarly $b\in S_K(x)$. 
\qedhere
\end{itemize}
\end{proof}
The scalars $\lambda_a,\epsilon_a$ in Proposition~\ref{pro:Alfsen} can be 
obtained from Menelaus' theorem in plane geometry, which asserts that a 
straight line divides the sides of a triangle in such a way that the 
product of the partial ratios is one \cite{gruenbaum-shephard95}.
\par
\begin{definition}[Dubins]\label{def:internal-point}
We call a point $x\in K$ a \emph{D-internal point} of $K$ if for every $y\in K$ 
there is $\epsilon>0$ such that $x+\epsilon(x-y)\in K$. 
\end{definition}
Let $\aff A$ be the \emph{affine hull} of a set $A\subset V$. This is the smallest 
affine space including $A$.
\par
\begin{remark}\label{remark:internal-points}
Definition~\ref{def:internal-point} replicates Dubins' concept of an ``internal 
point'' \cite{dubins62}, which we distinguish with the prefix ``D-'' here. 
Bourbaki's \emph{internal point} with respect to an affine space $A\supset K$
refers to a point $x\in K$ such that for every $a\neq x$ in $A$ there exists  
$\epsilon>0$ for which the open line segment 
$\{(1-\lambda)x+\lambda a: |\lambda|<\epsilon\}$ is included in $K$, see 
Section II.4.2 in \cite{bourbaki87}. 
\begin{itemize}
\item    
Internal points with respect to $V$ are simply called \emph{internal points} 
\cite{aliprantis-border06,bourbaki87,dunford-schwartz88,royden-fitzpatrick10}. 
They are the points $x\in K$ for which $K-x$ is \emph{absorbing} 
\cite{aliprantis-border06,bourbaki87,rudin91}. The set of internal points of 
$K$ is the \emph{core} or \emph{algebraic interior} 
\cite{barvinok02,holmes75,khazayel-etal21,zalinescu99} of $K$. 
\item 
The set of internal points of $K$ with respect to $\aff K$ is the 
\emph{intrinsic core} \cite{holmes75,khazayel-etal21,zalinescu99}
or \emph{relative algebraic interior} 
\cite{khazayel-etal21,weis-shirokov21,weis25,zalinescu99} of $K$. Clearly, the
intrinsic core of $K$ is included in the set of D-internal points  of $K$. The 
converse inclusion has been proven by Mill{\'a}n and Roshchina in 
\cite{millan-roshchina23}, Proposition 3.10, and independently in \cite{weis25}, 
Proposition~4.4. 
\end{itemize}
It is well known that the core%
\footnote{The intrinsic core of $K$ is the interior of $K$ in the relative topology of 
$\aff K$ induced by the finest locally convex topology on $V$\!, assuming Zermelo-Fraenkel 
set theory and the axiom of choice \cite[Section~3]{weis25}. If $\dim(K)<\infty$ then 
the intrinsic core of $K$ is known as the relative interior of $K$, which is nonempty if 
$K$ is so \cite[Section~6]{rockafellar70}.}
of $K$ is the interior of $K$ in the finest locally convex topology \cite[II.26]{bourbaki87} 
on $V$. The core of $K$ can be empty \cite{barvinok02,holmes75} even though $\aff(K)=V$.
\end{remark}
\begin{corollary}\label{cor:internal}
Every point $x\in K$ is a D-internal point of $F_K(x)$.
\end{corollary}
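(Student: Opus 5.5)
The plan is to read the claim off Proposition~\ref{pro:Alfsen}, which gives $F_K(x)=S_K(x)$. Fix $x\in K$ and an arbitrary $y\in F_K(x)$. By Definition~\ref{def:internal-point} we must find $\epsilon>0$ with $x+\epsilon(x-y)\in F_K(x)$. This point has to land in $F_K(x)$ itself, not merely in $K$.

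Since $y\in F_K(x)=S_K(x)$, there is $\epsilon>0$ such that $z:=x+\epsilon(x-y)\in K$. It remains to show $z\in F_K(x)$, and this is the only step needing an argument, because membership in $S_K(x)$ only puts $z$ in $K$. I will use that $F_K(x)$ is an extreme set of $K$ (it is a face, being the smallest one by Proposition~\ref{pro:Alfsen}). Rewrite $x$ as a proper convex combination of $z$ and $y$:
\[
x=\tfrac{1}{1+\epsilon}\,z+\tfrac{\epsilon}{1+\epsilon}\,y,
\]
with weight $\lambda=\epsilon/(1+\epsilon)\in(0,1)$. Both $z$ and $y$ lie in $K$, and their convex combination $x$ lies in $F_K(x)$. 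The extreme set property then forces $z\in F_K(x)$ (and also $y\in F_K(x)$, which we already knew).

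Alternatively, one can check directly that $z\in S_K(x)$: the point $x+\tfrac{1}{\epsilon}(x-z)=y$ lies in $K$, so $z\in S_K(x)=F_K(x)$. This gives the same conclusion without invoking extremality separately. The identification $F_K(x)=S_K(x)$ carries the whole argument, so I expect no real obstacle. The only care needed is the choice of $\epsilon$ and writing $x$ as a proper convex combination of $y$ and $z$.
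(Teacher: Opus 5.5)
Your proof is correct and matches the paper's argument: you use $F_K(x)=S_K(x)$ to obtain $z=x+\epsilon(x-y)\in K$, then conclude $z\in F_K(x)$ because $F_K(x)$ is an extreme set of $K$ containing $x=\frac{1}{1+\epsilon}z+\frac{\epsilon}{1+\epsilon}y$. Your alternative observation, that $x+\frac{1}{\epsilon}(x-z)=y\in K$ places $z$ directly in $S_K(x)$, is also valid and slightly shorter.
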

\begin{proof}
Proposition~\ref{pro:Alfsen} shows that for every $y\in F_K(x)$ there is $\epsilon>0$ such 
that $y':=x+\epsilon(x-y)$ lies in $K$. Since $F_K(x)$ is an extreme set of $K$ containing 
$x=\frac{\epsilon}{1+\epsilon}y+\frac{1}{1+\epsilon}y'$, it contains $y'$.
\end{proof}
\begin{figure}
\hfill
\begin{tikzpicture}
\draw[white] (-3.5,0) rectangle (1.5,3);
\tkzDefPoint(0,0){z}
\tkzDefPoint(-3,3){y}
\tkzDefPoint(3/2,3){yp}
\tkzDefPoint(-8/5,8/5){x}
\tkzDefPoint(4/5,8/5){xp}
\tkzDefPoint(-7/2,8/5){xpp}
\foreach \n in {z,x,xp,xpp,y,yp}
 \fill (\n) circle (2pt);
\tkzLabelPoint[right](z){$z$}
\tkzLabelPoint[above,xshift=0.7mm](x){$x$}
\tkzLabelPoint[right](xp){$x'$}
\tkzLabelPoint[left](xpp){$x''$}
\tkzLabelPoint[left](y){$y$}
\tkzLabelPoint[right](yp){$y'$}
\draw[thin,dotted,-,shorten >=2mm,shorten <=2mm] (xp) -- (yp);
\draw[thin,dotted,-,shorten >=2mm,shorten <=2mm] (yp) -- (y);
\draw[thick,-latex,shorten >=2mm,shorten <=2mm] 
 (y) -- (z) node[near end,below left] {\circled{1}};
\draw[thick,-latex,shorten >=2mm,shorten <=2mm] 
 (z) -- (xp) node[midway,below right] {\circled{2}};
\draw[thick,-latex,shorten >=2mm,shorten <=2mm] 
 (xp) -- (xpp) node[near start,above] {\circled{3}};
\end{tikzpicture}
\hspace*{\fill}
\caption{\label{fig:thm-char-ext}%
Sketch of the proof of injectivity for extreme points, by contraposition.
Let $x$ belong to the constraint set $H$ and let $y\neq y'$ in $F_K(x)$
satisfy $\alpha(y)=\alpha(y')$. In three steps 
\protect\circled{1}, \protect\circled{2}, and \protect\circled{3}, we
construct $z,x',x''$ in $F_K(x)$ such that $x''\neq x\neq x'$ and 
$\alpha(x'')=\alpha(x)=\alpha(x')$. Hence, $x$ is not an extreme point 
of $H$.}
\end{figure}
\begin{theorem}[Injectivity criterion]\label{thm:ext}~
\begin{enumerate}
\item 
If $x\in\ext H$ then $\alpha|_{F_K(x)}$ is injective.
\item 
If $C$ is a singleton, $x\in H$, and $\alpha|_{F_K(x)}$ is injective, 
then $x\in\ext H$. 
\end{enumerate}
\end{theorem}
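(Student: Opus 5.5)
For part (1) I argue by contraposition, following the three steps sketched in Figure~\ref{fig:thm-char-ext}. Let $x\in H$ and suppose $y\neq y'$ in $F_K(x)$ satisfy $\alpha(y)=\alpha(y')$. I will produce two distinct points of $H$, both different from $x$, whose midpoint-type combination equals $x$ and which have the same image as $x$ under $\alpha$. This shows $x\notin\ext H$.

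\circled{1} By Corollary~\ref{cor:internal}, $x$ is a D-internal point of $F_K(x)$. Hence there is $\epsilon>0$ with $z:=x+\epsilon(x-y)\in F_K(x)$, so that
\[
x=\tfrac{1}{1+\epsilon}z+\tfrac{\epsilon}{1+\epsilon}y .
\]
\circled{2} Put $x':=\tfrac{1}{1+\epsilon}z+\tfrac{\epsilon}{1+\epsilon}y'$. It lies in $F_K(x)\subset K$ by convexity of the face. Affinity of $\alpha$ together with $\alpha(y)=\alpha(y')$ gives $\alpha(x')=\alpha(x)\in C$, so $x'\in H$. Moreover $x'-x=\tfrac{\epsilon}{1+\epsilon}(y'-y)\neq0$.

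\circled{3} Again using that $x$ is D-internal in $F_K(x)$, choose $\delta>0$ with $x'':=x+\delta(x-x')\in F_K(x)$. Then $x''\neq x$. Since $x''$ is an affine combination of $x$ and $x'$, which have the same image, we get $\alpha(x'')=\alpha(x)\in C$, so $x''\in H$. Finally $x=\tfrac{1}{1+\delta}x''+\tfrac{\delta}{1+\delta}x'$ is an interior point of the segment $[x'',x']$ in $H$ with $x''\neq x'$, so $x\notin\ext H$.

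For part (2), let $C=\{c\}$, $x\in H$ with $\alpha|_{F_K(x)}$ injective, and suppose $x=(1-\lambda)a+\lambda b$ with $a,b\in H$ and $\lambda\in(0,1)$. Since $F_K(x)$ is a face of $K$ containing $x$, it contains $a$ and $b$. Because $C$ is a singleton, $\alpha(a)=\alpha(b)=c$. Injectivity then forces $a=b$, so $x=a=b$ and $x\in\ext H$.

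The only delicate point is the repeated use of Corollary~\ref{cor:internal} to extend segments beyond $x$ while staying inside $F_K(x)$. Once that is secured, everything else is affine bookkeeping.
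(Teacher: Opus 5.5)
Your proof is correct and is essentially the paper's argument. Part (1) uses the same three-step construction of $z$, $x'$ and $x''$ inside $F_K(x)$, justified by the D-internality of $x$ from Corollary~\ref{cor:internal} (your $\delta$ is the paper's $\epsilon'$), and part (2) uses the face property of $F_K(x)$ together with injectivity. You also make explicit two points the paper leaves implicit: that $x$ is the convex combination $\tfrac{1}{1+\delta}x''+\tfrac{\delta}{1+\delta}x'$, and that $\alpha(x'')=\alpha(x)$, which follows by applying $\alpha$ to that convex combination.
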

\begin{proof}
(1) Proceeding indirectly, we assume that $\alpha|_{F_K(x)}$ fails to be 
injective. Let $y,y'\in F_K(x)$ such that $y'\neq y$ and 
$\alpha(y')=\alpha(y)$. As depicted in Figure~\ref{fig:thm-char-ext}, there are 
$\epsilon,\epsilon'>0$ such that $F_K(x)$ contains the points 
$z:=x+\epsilon(x-y)$,
\[\textstyle
x':=\frac{1}{1+\epsilon}(z+\epsilon y')
=x+\frac{\epsilon}{1+\epsilon}(y'-y),
\quad\mbox{and}\quad
x'':=x+\epsilon'(x-x'),
\]
because $x$ is a D-internal point of $F_K(x)$ by Corollary~\ref{cor:internal}.
The constraint set $H$ contains $x'$ and $x''$ since 
$\alpha(x'')=\alpha(x')=\alpha(x)\in C$. The claim $x\not\in\ext H$ then 
follows from $x'\neq x$.
\par
(2) Let $x=(1-\lambda)x'+\lambda x''$ for some $x',x''\in H$ and $\lambda\in(0,1)$. 
Since $F_K(x)$ is an extreme set of $K$ containing $x$, we have $x',x''\in F_K(x)$. 
As $C$ is a singleton, $\alpha(x'')=\alpha(x')$ holds, hence $x''=x'$ follows from 
the injectivity of $\alpha|_{F_K(x)}$, which proves $x\in\ext H$.
\end{proof}
\begin{remark}\label{rem:x-internal-gen-x}
We believe the facial analysis of infinite-dimensional convex sets benefits a lot from
Alfsen's Proposition~\ref{pro:Alfsen} and its Corollary~\ref{cor:internal}, whose simple 
and intuitive use is demonstrated in \cite{weis25} and in the present paper, especially
Theorem~\ref{thm:ext}. Corollary~\ref{cor:internal} has already been stated in 
\cite[Proposition~2.3 (5)]{garcia-pacheco20} and used in 
\cite[Corollary~3.6]{millan-roshchina23}, without proofs. 
Corollary~\ref{cor:internal} and the equivalent, \emph{a priori} stronger assertion that 
every $x\in K$ lies in the relative algebraic interior of $F_K(x)$ are proved in 
\cite[Section~4]{weis25}, and, in a more complicated way, in 
\cite[Theorem 2.3]{weis-shirokov21}.
\end{remark}
%
%
\section{Finitely many affine constraints}
\label{sec:finite}
We retrieve the known characterization \cite{weizsaecker-winkler80,winkler88}
of extreme points under finitely many affine constraints with a simpler proof. 
Unless stated otherwise $d:=\dim W$ is finite. 
\par
We begin with two classical theorems of convex geometry quoting \cite{schneider14}, 
Theorem~1.1.4 and Corollary~1.4.5. Let $\conv A$ be the \emph{convex hull} of a set 
$A\subset V$. This is the smallest convex set including $A$. By definition, 
$x_0,\dots,x_n\in V$ are \emph{affinely independent} if $x_1-x_0,\dots,x_n-x_0$ 
are linearly independent. 
\par
\begin{theorem*}[Carath{\'e}odory]
Let $A\subset V$ be a set whose affine hull has finite dimension $k$. 
Then every point in $\conv A$ is a convex combination of $k+1$ or fewer 
affinely independent points in $A$.
\end{theorem*}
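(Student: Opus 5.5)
Take a point $x\in\conv A$. By definition of the convex hull, $x=\sum_{i=0}^{n}\lambda_i x_i$ for finitely many points $x_0,\dots,x_n\in A$, with $\lambda_i>0$ and $\sum_i\lambda_i=1$. Among all such representations of $x$ we pick one with $n$ minimal. The plan is to show that for this minimal representation the points $x_0,\dots,x_n$ are affinely independent. Then $n\le k$ follows, because every affinely independent family in $\aff A$ has at most $k+1$ members, as $\dim\aff A=k$.

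Suppose, toward a contradiction, that $x_0,\dots,x_n$ are affinely dependent. Then the vectors $x_1-x_0,\dots,x_n-x_0$ are linearly dependent. So there are scalars $\mu_1,\dots,\mu_n$, not all zero, with $\sum_{i\ge1}\mu_i(x_i-x_0)=0$. Put $\mu_0:=-\sum_{i\ge1}\mu_i$. This gives a nontrivial vector $(\mu_0,\dots,\mu_n)$ with
\[\textstyle\sum_{i=0}^{n}\mu_i x_i=0 \quad\text{and}\quad \sum_{i=0}^{n}\mu_i=0.\]
Since the $\mu_i$ are not all zero and sum to zero, at least one of them is positive.

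Next, for $t\ge0$ consider $\lambda_i(t):=\lambda_i-t\mu_i$. Each representation $x=\sum_i\lambda_i(t)x_i$ still holds, and the weights still sum to $1$. Set
\[t^*:=\min\{\lambda_i/\mu_i:\mu_i>0\}>0.\]
Then every $\lambda_i(t^*)$ is nonnegative, and at least one of them is zero. Dropping the points with zero weight gives a convex combination of $x$ with at most $n$ points of $A$. This contradicts the minimality of $n$, so the points are affinely independent.

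The only step that needs care is choosing $t^*$ so that all weights stay nonnegative. Taking the minimum over the indices with $\mu_i>0$ handles this: those weights decrease but none goes below zero, while the weights with $\mu_i\le0$ do not decrease. The rest is routine. In particular, affine independence of the points inside the $k$-dimensional space $\aff A$ bounds their number by $k+1$, because the differences $x_i-x_0$ lie in the $k$-dimensional linear space parallel to $\aff A$.
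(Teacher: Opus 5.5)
Your proof is correct and is the standard one: a minimal representation with positive weights, an affine dependence $\sum_i\mu_i x_i=0$, $\sum_i\mu_i=0$ used to shift weight until one coefficient vanishes, and the bound $n\le k$ from $\dim\aff A=k$. The paper does not prove Carath\'eodory's theorem itself but quotes it from Schneider's book (Theorem~1.1.4), where essentially this same reduction argument appears.

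One small point: the paper defines $\conv A$ as the smallest convex set containing $A$. Writing $x$ as a finite convex combination of points of $A$ therefore uses the standard, easy fact that this set equals the set of all finite convex combinations; it is not literally ``by definition.''
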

\begin{theorem*}[Minkowski]
If $\dim K<\infty$ and $K$ is compact, then $K=\conv\ext K$.
\end{theorem*}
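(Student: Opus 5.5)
The last statement is Minkowski's theorem: a compact convex set $K$ of finite dimension satisfies $K=\conv\ext K$. I will argue by induction on $\dim K$, using the faces $F_K(x)$ of Section~\ref{sec:main-result}. Since $\ext K\subset K$ and $K$ is convex, the inclusion $\conv\ext K\subset K$ is immediate. All the work lies in the reverse inclusion.

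The base cases are trivial. If $K$ is empty or a single point, the claim holds. If $\dim K=1$, then $K$ is a compact segment and is the convex hull of its two endpoints, which are extreme.

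For the inductive step, work inside $\aff K$, which we identify with $\R^k$. Fix $x\in K$.

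First suppose $x$ is not a relative interior point of $K$, so it lies on the relative boundary. By the finite-dimensional supporting hyperplane theorem there is a linear functional $\ell$, nonconstant on $K$, with $\ell(x)=\max_K\ell$. The set $G:=\{y\in K:\ell(y)=\ell(x)\}$ is then a compact face of $K$ with $\dim G<\dim K$. By the induction hypothesis, $x\in\conv\ext G$. Every extreme point of a face of $K$ is an extreme point of $K$: an extreme point of $G$ is an extreme set of $G$, and $G$ is an extreme set of $K$, so it is an extreme set of $K$. Hence $x\in\conv\ext K$.

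Alternatively, one can avoid separation and use $F_K(x)$ directly. If $F_K(x)\neq K$, then $F_K(x)$ is convex and lies in a proper affine subspace; otherwise its relative interior would meet that of $K$ and it would absorb all of $K$. Compactness of $F_K(x)$, however, is not automatic without more structure, so I will use the exposed face $G$ as above.

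Now suppose $x$ is in the relative interior of $K$ and $\dim K\ge 1$. Pick any line $L$ through $x$ inside $\aff K$. Then $L\cap K$ is a compact segment $[a,b]$ containing $x$. The endpoints $a,b$ lie on the relative boundary of $K$, because every neighbourhood of them in $L$ leaves $K$. By the previous case, $a,b\in\conv\ext K$, and therefore $x\in\conv\ext K$.

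The main obstacle is the supporting hyperplane step. It needs the fact that relative boundary points exist on each line and are supported by a nontrivial functional. This is standard in finite dimensions: separate $x$ from the nonempty open relative interior. Otherwise one must verify that the exposed set $G$ is a compact face of strictly smaller dimension. Convexity and closedness of $G$ are clear, and the face property follows from linearity of $\ell$ together with maximality.
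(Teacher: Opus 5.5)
Your proof is correct. The paper gives no proof of its own here: it simply quotes the result from Schneider (Corollary~1.4.5). Your induction on $\dim K$, using supporting hyperplanes at relative boundary points and chords through relative interior points, is the classical argument for it.

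One correction to your aside. Since $\dim K<\infty$, every face $F$ of $K$ satisfies $F=K\cap\aff F$, which is the fact the paper uses in the proof of Proposition~\ref{pro:ext-fin}~(1). So $F_K(x)$ is closed in $K$, hence compact, and the facial route you set aside would work just as well.
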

\begin{definition}
Let $\N:=\{1,2,3,\dots\}$ and $\N_0:=\{0\}\cup\N$. 
The convex set $K$ is 
\begin{enumerate}
\item 
\emph{linearly compact} 
\cite{kendall62,klee63-compact} if $\ell\cap K$ is a compact line 
segment (possibly degenerated to a singleton) for every line 
$\ell$ in $V$ that meets $K$.
\item 
\emph{$k$-neighborly}\footnote{%
The term \emph{$k$-neighborly} extends its homonym in the theory of 
polytopes \cite{gruenbaum03,sturmfels88,ziegler95}.}
if the convex hull of any $k$ or less extreme points of 
$K$ is a face of $K$, $k\in\N$.
\end{enumerate}
\end{definition}
\begin{proposition}[Weizs\"{a}cker-Winkler]\label{pro:ext-fin}
Let 
\begin{align}
\label{eq:Hprime1}
H' & \textstyle 
:= \{ x\in H : 
\text{$x$ is a convex combination of at most $d+1$ extreme points}\\
\nonumber
& \textstyle \hspace{\widthof{$:=\{$}}
\text{$x_1,\dots,x_k$ of $K$ and $\alpha(x_1),\dots,\alpha(x_k)$ 
are affinely independent\,}\}.
\end{align}
\begin{enumerate}
\item
If $K$ is linearly compact then $\ext H\subset H'$.
\item 
If $K$ is $(d+1)$-neighborly and if $C$ is a singleton,
then $H'\subset\ext H$.
\end{enumerate}
\end{proposition}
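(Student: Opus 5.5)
Both parts will be derived from the injectivity criterion, Theorem~\ref{thm:ext}.

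\emph{Part (1).} Let $x\in\ext H$. By Theorem~\ref{thm:ext}(1), $\alpha|_{F_K(x)}$ is injective. Since $\alpha$ is affine and injective on the convex set $F_K(x)$, it is an affine isomorphism of $F_K(x)$ onto its image in $W$. Hence $\dim F_K(x)=\dim\aff F_K(x)\le d<\infty$.

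The next step is to show that $F_K(x)$ is compact in its finite-dimensional affine hull. A face of a linearly compact set is linearly compact: for a line $\ell$ meeting $F_K(x)$, the set $\ell\cap K$ is a compact segment, and $\ell\cap F_K(x)$ is a closed sub-segment because $F_K(x)$ is an extreme set. I will argue this via the endpoints: if $p\in\ell\cap K$ lies in the closure of $\ell\cap F_K(x)$ but not in it, pick $q\in \ell\cap F_K(x)$; then points of $F_K(x)$ lie strictly between $q$ and $p$ on the segment $[q,p]\subset K$, and extremality forces $p\in F_K(x)$. A finite-dimensional convex set all of whose line sections are compact is closed and bounded. Boundedness holds because an unbounded convex set in $\R^k$ contains a ray, and closedness follows since every boundary point lies on a line through a relative interior point. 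So $F_K(x)$ is compact and finite-dimensional.

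By Minkowski, $F_K(x)=\conv\ext F_K(x)$. Since $F_K(x)$ is a face, $\ext F_K(x)\subset\ext K$. Carathéodory then writes $x$ as a convex combination of affinely independent extreme points $x_1,\dots,x_k$ of $F_K(x)$, with $k\le\dim F_K(x)+1\le d+1$. Because $\alpha$ is an affine isomorphism on $F_K(x)$, the images $\alpha(x_1),\dots,\alpha(x_k)$ are affinely independent. Thus $x\in H'$. The main obstacle in this part is the linear-compactness-to-compactness step; everything else is formal.

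\emph{Part (2).} Let $x\in H'$, written as $x=\sum_i\lambda_i x_i$ with $k\le d+1$ extreme points $x_i$ of $K$ whose images are affinely independent. We may assume all $\lambda_i>0$: dropping points with zero weight preserves affine independence of the remaining images. By $(d+1)$-neighborliness, $G:=\conv\{x_1,\dots,x_k\}$ is a face of $K$ containing $x$, so $F_K(x)\subset G$.

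Affine independence of the $\alpha(x_i)$ forces affine independence of the $x_i$. Therefore $\alpha|_G$ maps a simplex affinely and bijectively onto the simplex spanned by the $\alpha(x_i)$, with barycentric coordinates preserved, and so it is injective. Consequently $\alpha|_{F_K(x)}$ is injective. Since $C$ is a singleton and $x\in H$, Theorem~\ref{thm:ext}(2) gives $x\in\ext H$. This part is routine once neighborliness supplies the face $G$.
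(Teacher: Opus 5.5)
Your proposal is correct and follows the paper's proof essentially step by step. In part (1) you use injectivity of $\alpha|_{F_K(x)}$ from Theorem~\ref{thm:ext}(1), compactness of the finite-dimensional face, and then Minkowski and Carath\'eodory; in part (2) you use neighborliness to get a face $\conv\{x_1,\dots,x_k\}\supset F_K(x)$ on which $\alpha$ is injective. The only difference is that you verify compactness of $F_K(x)$ by hand (faces inherit linear compactness, and a finite-dimensional linearly compact convex set is closed and bounded), whereas the paper uses $F_K(x)=K\cap\aff F_K(x)$ and cites Kendall.
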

\begin{proof}
(1) Let $x\in\ext H$ and $F:=F_K(x)$. Then $\dim F\leq d$ as $\alpha|_F$ is injective by 
Theorem~\ref{thm:ext}~(1). Since $F$ is a face of $K$, we have $F=K\cap\aff F$. As $K$ is 
linearly compact and $\dim\aff F<\infty$, this implies that $F$ is compact
\cite[Section~4]{kendall62}. Thus, $x\in\conv\ext F$ by Minkowski's theorem, so $x$ is 
a convex combination of at most $d+1$ affinely independent extreme points $x_1,\dots,x_k$ 
of $F$ by Carath{\'e}odory's theorem. The points $x_1,\dots,x_k$ lie in $\ext K$, as $F$ is 
a face of $K$. Since $\alpha|_F$ is injective, the points $\alpha(x_1),\dots,\alpha(x_k)$ 
are affinely independent.
\par
(2) Let $x\in H'$. Then $x$ is a convex combination of at most $d+1$ extreme points 
$x_1,\dots,x_k$ of $K$. The affine independence of $\alpha(x_1),\dots,\alpha(x_k)$ implies 
that $\alpha$ restricted to $F:=\conv\{x_1,\dots,x_k\}$ is injective. Since $F$ is a face 
of the $(d+1)$-neighborly set $K$ we have $F_K(x)\subset F$. Thus $\alpha$ restricted to 
$F_K(x)$ is injective, so Theorem~\ref{thm:ext} (2) proves the claim.
\end{proof}
The affine independence property of the points in $H'$ is a recurring pattern
in this paper.
\par
\begin{example}[Counterexamples to Proposition~\ref{pro:ext-fin} 
under weaker assumptions]\label{exa:counter}~
\begin{enumerate}
\item 
The inclusion $\ext H\subset H'$ can fail if $K$ is not linearly compact.
An example is the open unit segment $K:=(0,1)$, the embedding 
$\alpha\colon K\to\R$, and $C:=\{\frac{1}{2}\}$.
\item 
All assumptions of Proposition~\ref{pro:ext-fin}~(2) are necessary 
for the inclusion $H'\subset\ext H$.
\begin{enumerate}
\item 
If $K\subset\R^2$ is a quadrilateral, $\alpha\colon K\to\R$, $(x,y)\mapsto x$,
and $C:=\{x_0\}$ for some $x_0\in\alpha(K)$, then $H=\alpha^{-1}(\{x_0\})$ is 
a segment (possibly degenerated to a point) and $H'$ is the intersection of 
$H$ and the union of all non-vertical edges and diagonals of $K$. Thus, $H'$ 
is typically a set of three or four mutually distinct points of which only two 
are in $\ext H$. (Quadrilaterals are not two-neighborly.)
\item
If $K$ is a triangle in the plane $\R^2$ and $\alpha\colon K\to\R$, $(x,y)\mapsto x$,
then, again, $H'$ is the intersection of $H$ and the union of the non-vertical edges 
of $K$. Unlike a quadrilateral, the triangle $K$ is two-neighborly. Still, 
$H'\subset\ext H$ fails whenever $C\cap\alpha(K)$ is more than just a point
(the assumption that $C$ is a singleton is violated).
\item 
Let $K$ be a compact convex set in $\R^d$ and let $\alpha\colon K\to\R^d$ be the 
embedding. Minkowski's and Carath\'{e}odory's theorems show $H'=H=C\cap K$. Thus, 
$H'\subset\ext H$ fails unless $C\cap K$ in a point or empty.
\end{enumerate}
\end{enumerate}
\end{example}
\begin{remark}[Background]\label{rem:background-WWext}
We collect conditions for a point $x\in H$ to be an extreme point of $H$. 
For every $k\in\N$, let $H''(k)$ be the set of all points in $H$ that  
are convex combinations of at most $k$ extreme points of $K$. Dubins 
\cite{dubins62} establishes the necessity of $x\in H''(d+1)$ for linearly 
compact $K$. More generally, if the smallest face $F_H(x)$ of 
$H$ containing $x$ has dimension $k\in\N_0$, then Klee \cite{klee63-dubins} 
shows $x\in H''(d+k+1)$. Weizs\"{a}cker and Winkler recognize the meaning 
of affine independence \cite[lemma on p.~136]{weizsaecker-winkler80} and 
\cite[Proposition~2.1]{winkler88}. They prove that $x\in H'$ is necessary 
for linearly compact $K$ and sufficient for singletons $C$ and 
$(d+1)$-neighborly $K$. 
\end{remark}
\begin{definition}[Simplices]\label{def:simplex}
The convex set $K$ is a \emph{simplex}%
\footnote{Simplices are called \emph{Choquet simplices} in \cite{winkler85}.}
if the convex cone 
\[
\{\lambda(x,1): \lambda\geq 0, x\in K\}\subset V\times\R
\] 
is a lattice in its own order \cite[Section~10]{phelps01}. A \emph{$k$-simplex}, $k\in\N_0$,
is defined as the convex hull of any $k+1$ affinely independent points. 
\end{definition}
It is well known that a $k$-dimensional convex set, $k\in\N_0$, is a simplex if and only if
it is a $k$-simplex \cite[Proposition~10.10]{phelps01}.
\par
\begin{remark}[Summary]\label{rem:our-contribution-finite}
The statement of Proposition~\ref{pro:ext-fin}~(1) that every extreme point $x$ of $H$ lies 
in $H'$, if $K$ is linearly compact, is originally proved using a family of $k$-simplices 
\cite{weizsaecker-winkler80,winkler88}. Our proof is shorter and, we believe, more intuitive: 
we apply Minkowski's and Carath{\'e}odory's theorems to a single convex set, the smallest 
face of $K$ containing $x$. 
\end{remark}
We quote arguments of convex geometry, to be used frequently.
\par
\begin{lemma}\label{lem:basic-convexity}~
\begin{enumerate}
\item 
The convex hull of every set of extreme points of a simplex is a face of 
this simplex. In particular, every simplex is $k$-neighborly for every 
$k\in\N$. 
\item 
Every simplex is linearly compact.
\item
Every face $F$ of $K$ inherits certain properties from $K$:
\begin{enumerate}
\item 
If $K$ is a simplex then so is $F$.
\item 
If $K$ is $k$-neighborly for some $k\in\N$ then so is $F$.
\item 
If $K$ is linearly compact then so is $F$.
\end{enumerate}
\end{enumerate}
\end{lemma}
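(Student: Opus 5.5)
I will work throughout with the cone $\widehat K:=\{\lambda(x,1):\lambda\ge 0,\ x\in K\}\subset V\times\R$, which is a lattice in its own order when $K$ is a simplex (Definition~\ref{def:simplex}). The linear functional $\phi(v,t):=t$ is additive and monotone on $\widehat K$, and $K$ is the base $\{\phi=1\}$. I will prove the three items in the order (3), (2), (1), because the part of (1) about the convex hull of an arbitrary set of extreme points is most naturally handled by reduction to the finite-dimensional case.

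Item (3) is routine. For (3)(c), let $\ell$ be a line meeting $F$. Then $\ell\cap K$ is a compact segment $[a,b]$. The set $\ell\cap F$ is convex, and since $F$ is extreme, any interior point of $[a,b]$ lying in $F$ forces $a,b\in F$. So $\ell\cap F$ is either $[a,b]$, or an endpoint, or a singleton; in each case it is compact. For (3)(b), $\ext F=F\cap\ext K$. A face of $K$ contained in $F$ is a face of $F$, and $\conv$ of at most $k$ points of $\ext F$ is a face of $K$ lying in $F$. For (3)(a), I will show that $\widehat F$ is a hereditary subcone of $\widehat K$: if $0\le u\le w\in\widehat F$, write $w=u+(w-u)$ and normalize by $\phi$. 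This exhibits $\phi(w)^{-1}w$ as a convex combination of the normalized points, and extremality of $F$ puts them in $F$. Infima and suprema of two elements of $\widehat F$ computed in $\widehat K$ lie in $\widehat F$: the infimum by heredity, and the supremum because $u\vee w\le u+w\in\widehat F$. They are then also the lattice operations of $\widehat F$.

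For item (2), let $\ell$ be a line meeting $K$. I will first show that $\ell\cap K$ is closed in $\ell$. Suppose $x+t y_0\in K$ for $t\in[0,s)$, and write the direction as a difference of elements of $\widehat K$ after lifting to the cone. I will use the lattice to take $p:=$ the positive part of the direction in the ordered space $\widehat K-\widehat K$, which is a vector lattice by Riesz decomposition. From the relation $(x,1)+t(p-n)\in\widehat K$ for all $t<s$, together with $p\wedge n=0$, I plan to deduce $t\,n\le(x,1)$ for all $t<s$. Then $s\,n\le(x,1)$, and so $(x,1)+s(p-n)\ge0$, giving the endpoint. For boundedness, the same inequality $t\,n\le(x,1)$ gives $t\,\phi(n)\le1$. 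If $\phi(n)=0$, then $n=0$ and the direction is positive with $\phi(p)=\phi(n)=0$, so the line is trivial. \textbf{This step is the main obstacle}: the passage from the lattice property of the cone to the inequality $t\,n\le(x,1)$ (the standard ``$a+b\ge0$, $a\ge0$ disjointness'' argument) has to be carried out carefully in the cone without a topology. If it gets stuck, I would fall back on citing the standard fact that simplices are linearly compact.

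For item (1), let $E\subset\ext K$, $G:=\conv E$, and $y=(1-\lambda)a+\lambda b\in G$ with $a,b\in K$ and $0<\lambda<1$. Since $y$ is a convex combination of finitely many $e_1,\dots,e_m\in E$, it suffices to show that $G_0:=\conv\{e_1,\dots,e_m\}$ is a face. In the cone, $(1-\lambda)\hat a\le\hat y=\sum\mu_i\hat e_i$. Riesz decomposition in the lattice cone splits $(1-\lambda)\hat a=\sum u_i$ with $0\le u_i\le\mu_i\hat e_i$. Since $e_i$ is extreme, the ray through $\hat e_i$ is an extreme ray, and so each $u_i$ is a multiple of $\hat e_i$. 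Hence $a\in G_0$, and likewise $b\in G_0$. The claim of $k$-neighborliness follows immediately.
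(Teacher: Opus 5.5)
There is one genuine gap, in item (2), and it sits one line after the step you flagged. The step you call the main obstacle is routine. With $z:=(x,1)$, the relation $0\le tn\le z+tp$ and Riesz decomposition give $tn=u+v$ with $0\le u\le z$ and $0\le v\le tp$. Then $v\le tn\wedge tp=t(n\wedge p)=0$, so $tn=u\le z$.

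What you do not justify is ``Then $s\,n\le(x,1)$''. There is no topology here, and passing from all $t<s$ to $t=s$ is an Archimedean property. It fails for general cones with a base. For $K=[0,1)\subset\R$ one has $\widehat K=\{(a,b):0\le a<b\}\cup\{0\}$, and $z=(\tfrac12,1)$, $n=(0,1)$ satisfy $tn\le z$ exactly for $t<\tfrac12$. So this step is precisely where linear closedness lives, and it needs the lattice property a second time.

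A repair: put $w:=z\wedge sn$. For each $t<s$, $tn$ is a lower bound of $z$ and $sn$, so $tn\le w$ and hence $0\le sn-w\le(s-t)n$. Applying $\phi$ yields $0\le\phi(sn-w)\le(s-t)\phi(n)$ for all $t<s$, so $\phi(sn-w)=0$. Hence $sn=w\le z$, because $\phi$ vanishes on $\widehat K$ only at $0$. With this inserted, your closedness and boundedness arguments for (2) go through.

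The rest is correct and takes a different route from the paper, which proves nothing itself here. It cites Winkler for (1), Kendall for (2), and Winkler and Phelps for (3a), and calls (3b), (3c) easy. Your proof of (1) is sound: Riesz decomposition, plus the observation that $0\le u\le\mu_i\hat e_i$ forces $u$ onto the ray through $\hat e_i$. Your proofs of (3b) and (3c) are also sound.

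Two small corrections:
\begin{enumerate}
\item $\widehat K-\widehat K$ is a vector lattice because $\widehat K$ is a lattice cone: suprema in $\widehat K$ are translation invariant and are also suprema in $\widehat K-\widehat K$. Riesz decomposition is a consequence of this, not its reason.
\item In (3a), add that for $u,w\in\widehat F$ with $w-u\in\widehat K$, heredity gives $w-u\in\widehat F$. So the order of $\widehat F$ is the restriction of that of $\widehat K$, and only then are the $\widehat K$-lattice operations those of $\widehat F$.
\end{enumerate}

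What your approach buys over the citations is a self-contained argument. It shows that the whole lemma rests on the vector-lattice structure of $\widehat K-\widehat K$ together with the strictly positive functional $\phi$.
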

\begin{proof}
(1) See \cite[Lemma~2.3]{winkler88}.
(2) See part (2\textsuperscript{o}) of the theorem in \cite{kendall62}. 
(3a) See \cite[Lemma~3.2]{winkler2000}; see also \cite[Lemma~10.4]{phelps01}. 
(3b) and (3c) are easy to prove.
\end{proof}
%
%
\section{Extreme points in generalized moment problems}
\label{sec:problem-of-moments}
We revisit generalized moment problems under finitely many moment constraints.
Section~\ref{sec:ex-moment} studies the extreme points of the constraint set
and Section~\ref{sec:measure-lp} is concerned with optimal values of integral 
functionals. Section~\ref{sec:Dacorogna-MP} associates generalized convex 
envelopes and optimal value functions.
\par
%
%
\subsection{Extreme points under moment constraints}
\label{sec:ex-moment}
We examine the extreme points of a convex set defined by finitely many moment 
constraints on a convex subset $Q$ of the probability simplex $P$. If $Q$ itself
is a simplex whose extreme points are point measures, then the extreme points are 
finitely atomic measures having the affine independence property 
\cite{weizsaecker-winkler80,winkler88}. We study the new assumption that $Q$ is 
a face of $P$ in this necessary condition for a point to be an extreme point. We 
prove that $Q$ being a simplex is redundant in the converse, sufficient condition.  
\par
\begin{definition}
Let $(X,\cF)$ be a measurable space whose $\sigma$-algebra $\cF$ contains all 
singletons in $X$. A \emph{measure} is a countably additive map $\cF\to[0,\infty)$. 
A measure $\mu$ is \emph{concentrated} on $A\in\cF$ if $\mu(X\setminus A)=0$.
A measure $\mu$ is a \emph{probability measure} if its total mass is $\mu(X)=1$. 
Let $P$ be the convex set of probability measures on $(X,\cF)$. The 
\emph{point measure} at $x\in X$, denoted $\delta_x$, is the measure with values 
$\delta_x(A):=1$ if $x\in A$ and $\delta_x(A):=0$ otherwise for every $A\in\cF$. 
Let 
\[
P_1:=\{\delta_x: x\in X\}.
\]
A measure $\mu$ is \emph{$k$-atomic} \cite{klerk-laurent19,lasserre10,nie23,schmuedgen17}, 
$k\in\N_0$, if $\mu=\lambda_1\delta_{x_1}+\dots+\lambda_k\delta_{x_k}$ for mutually 
distinct points $x_1,\dots,x_k$ in $X$ and strictly positive real weights 
$\lambda_1,\dots,\lambda_k>0$. A measure is \emph{finitely atomic} if it is $k$-atomic 
for some $k\in\N_0$. Let
\[
P\!_\fint:=
\{\mu\in P : \text{$\mu$ is finitely atomic}\,\}.
\]
\end{definition}
As $\cF$ contains all singletons, every finitely atomic measures is concentrated on a 
finite set each of whose points has strictly positive measure.
\par
\begin{remark}\label{rem:measures-basics}
We recall basic convex geometry of the convex set $P$ of probability measures.
\begin{enumerate}
\item
A classical result by Yosida and Hewitt states that $P$ is a simplex 
\cite[Theorem~1.14]{yosida-hewitt52}, see also \cite[Section~10.11]{aliprantis-border06}. 
Thus, we refer to $P$ as the \emph{probability simplex}. There are various nonequivalent 
definitions of simplices in infinite dimensions \cite[Section~1.5]{winkler85}. The one 
using barycenters of boundary measures is much more subtle than the order theoretic one 
(of Definition~\ref{def:simplex}) we are using in this paper.
\item 
It is well known that a measure $\mu$ on $\cF$ is an extreme point of $P$ if and 
only if $\mu$ is two-valued. This is easy to prove using conditional probability 
measures. In particular, $P_1\subset\ext P$ holds. Thereby, $\mu$ is \emph{two-valued} 
if $\mu(A)\in\{0,1\}$ for every $A\in\cF$. 
\item
The set $P\!_\fint$ of finitely atomic measures has the extreme point set $\ext P\!_\fint=P_1$ 
and $P\!_\fint$ is a face of the probability simplex $P$. In particular, $P\!_\fint$ is a 
linearly compact, $k$-neighborly simplex for every $k\in\N$. Indeed, having in mind 
$P\!_\fint=\conv P_1$, one gets $\ext P_\fint\subset P_1$. In addition, $P_1\subset\ext P$ 
holds by (2), hence $\ext P_\fint=P_1$ and it follows from Lemma~\ref{lem:basic-convexity}~(1) 
that $P\!_\fint$ is a face of $P$, since $P$ is a simplex by (1). The rest follows from
Lemma~\ref{lem:basic-convexity}. (More elementary proofs are possible.)
\end{enumerate}
\end{remark}
\begin{example}[Co-countable $\sigma$-algebra]\label{rem:co-countable}
By Remark~\ref{rem:measures-basics}~(2), every point measure is an extreme 
point of $P$. The \emph{co-countable $\sigma$-algebra} 
$\{A\subset\R:\text{$A$ or its complement is at most countable}\,\}$
admits extreme points of $P$ that are no point measures. An example is the 
measure that assigns the value $0$ to the at most countable sets and $1$ 
to the uncountable sets \cite[Section~6.9]{schilling-kuehn21}.
\end{example}
\begin{definition}[Moment constraints]\label{def:moment-constraints}
Let $\bp\colon X\to\R^d$ be measurable with respect to $\cF$ and let
\begin{align*}
P\!_\ait  & \textstyle 
:=\{ \mu\in P\!_\fint : 
\text{$\mu$ is concentrated on a finite subset $\{x_1,\dots,x_k\}\subset X$}\\
& \textstyle \hspace{\widthof{$:=\{$}}
\text{such that $\bp(x_1),\dots,\bp(x_k)$ are affinely independent}\,\}.
\end{align*}
We refer to the measures in $P\!_\ait$ as finitely atomic measures having the 
affine independence property. Let $C\subset\R^d$ be a  convex subset and let
\begin{align*}
G_\fint(C) & \textstyle
:= \{\mu\in P\!_\fint : \int\bp\dd\mu\in C\},\\
G_\ait(C) & \textstyle
:= \{\mu\in P\!_\ait : \int\bp\dd\mu\in C\}.
\end{align*}
Let $Q\subset P$ be a convex set of probability measures. Denoting the components of 
$\bp=(p_1,\ldots,p_d)$ by $p_i\colon X\to\R$, $i=1,\ldots,d$, we define
\begin{align*}
Q^\bp & \textstyle 
:= \{\mu\in Q : \text{$p_i$ is $\mu$-integrable $\forall i=1,\dots,d$}\,\},\\
G(C) & \textstyle
:= \{\mu\in Q^\bp : \int\bp\dd\mu\in C\}.
\end{align*}
We abbreviate $G_\fint(\bq):=G_\fint(\{\bq\})$, etc., for singletons 
$\{\bq\}$ with elements $\bq\in\R^d$.
\end{definition}
The measures in $P\!_\ait$ are concentrated on finite sets of sizes $d+1$ or less because 
of the affine independence property, which is a necessary condition for measures in $G(C)$ 
to be extreme points:
\par
\begin{theorem}[Necessary condition]\label{thm:moment-char-ext}
Consider the following assertions.
\begin{enumerate}
\item 
The convex set $Q$ is a face of $P$. 
Furthermore, we have $\ext Q\subset P_1$ or $\ext P=P_1$.
\item 
The convex set $Q$ is a simplex and $\ext Q\subset P_1$.
\item 
The convex set $Q$ is linearly compact and $\ext Q\subset P_1$.
\item 
We have $\ext G(C)\subset G_\ait(C)$.
\end{enumerate}
Then: (1)$\Rightarrow$(2)$\Rightarrow$(3)$\Rightarrow$(4).
\end{theorem}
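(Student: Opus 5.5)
The plan is to prove the three implications separately. The first two are short facial arguments using Remark~\ref{rem:measures-basics} and Lemma~\ref{lem:basic-convexity}. The last one reduces to Proposition~\ref{pro:ext-fin}~(1) applied to a suitable convex set $K$ and constraint map $\alpha$.

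\emph{(1)$\Rightarrow$(2).} By Remark~\ref{rem:measures-basics}~(1), $P$ is a simplex. Since $Q$ is a face of $P$, Lemma~\ref{lem:basic-convexity}~(3a) shows that $Q$ is a simplex. For the extreme points, the first alternative $\ext Q\subset P_1$ is exactly what is needed. In the second alternative, $\ext P=P_1$, I use that an extreme point of a face is an extreme point of the ambient set: a one-point face of a face is a face of $P$. This gives $\ext Q\subset\ext P=P_1$.

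\emph{(2)$\Rightarrow$(3).} This is immediate from Lemma~\ref{lem:basic-convexity}~(2), since every simplex is linearly compact.

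\emph{(3)$\Rightarrow$(4).} I will take $K:=Q^\bp$, $V$ the space of finite signed measures, $W:=\R^d$, and $\alpha(\mu):=\int\bp\dd\mu$. The map $\alpha$ is affine on $K$, and $H=\alpha^{-1}(C)=G(C)$.

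The key preliminary step, which I expect to be the only real obstacle, is to show that $Q^\bp$ is a face of $Q$. Convexity is clear, since a convex combination of two measures that integrate $|p_i|$ again integrates it. For the extremality, suppose $\mu=(1-\lambda)\nu+\lambda\rho$ with $\nu,\rho\in Q$, $\lambda\in(0,1)$ and $\mu\in Q^\bp$. Then $\nu\le\mu/(1-\lambda)$ and $\rho\le\mu/\lambda$ setwise, so $\int|p_i|\dd\nu\le\frac{1}{1-\lambda}\int|p_i|\dd\mu<\infty$, and similarly for $\rho$. Hence $\nu,\rho\in Q^\bp$.

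Consequences of the face property:
\begin{itemize}
\item By Lemma~\ref{lem:basic-convexity}~(3c), $Q^\bp$ inherits linear compactness from $Q$.
\item As above, $\ext Q^\bp\subset\ext Q\subset P_1$.
\end{itemize}

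Now apply Proposition~\ref{pro:ext-fin}~(1). Every $\mu\in\ext G(C)$ is a convex combination of at most $d+1$ extreme points of $Q^\bp$, namely point measures $\delta_{x_1},\dots,\delta_{x_k}$, such that $\alpha(\delta_{x_i})=\bp(x_i)$, $i=1,\dots,k$, are affinely independent. Affine independence forces the points $\bp(x_i)$, and hence the $x_i$, to be mutually distinct. Dropping zero weights, $\mu$ is finitely atomic and concentrated on $\{x_1,\dots,x_k\}$ with $\bp(x_1),\dots,\bp(x_k)$ affinely independent, so $\mu\in P\!_\ait$. Since $\int\bp\dd\mu\in C$, it follows that $\mu\in G_\ait(C)$.
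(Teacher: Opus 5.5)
Your proposal is correct and follows the paper's proof essentially step for step. The implications (1)$\Rightarrow$(2)$\Rightarrow$(3) use the same facial arguments, and (3)$\Rightarrow$(4) is the same reduction: first show that $Q^\bp$ is a face of $Q$, then apply Proposition~\ref{pro:ext-fin}~(1) with $K:=Q^\bp$ and $\alpha(\mu)=\int\bp\dd\mu$. The only differences are in detail, not in approach. You check directly that $Q^\bp$ is a face of $Q$ via the domination $\nu\le\mu/(1-\lambda)$, whereas the paper observes that $P^\bp$ is a face of $P$ (``easy to see'') and intersects with $Q$. You also spell out the passage from $H'$ to $G_\ait(C)$ (distinct atoms, dropping zero weights), which the paper leaves implicit.
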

\begin{proof}
(1)$\Rightarrow$(2). 
Since $Q$ is a face of $P$, we have $\ext Q\subset\ext P$. So $\ext P=P_1$
implies $\ext Q\subset P_1$. Since $P$ is a simplex, its face $Q$ is also 
a simplex, see Remark~\ref{rem:measures-basics}~(1) and 
Lemma~\ref{lem:basic-convexity}~(3a).
\par
(2)$\Rightarrow$(3).
Every simplex is linearly compact by Lemma~\ref{lem:basic-convexity}~(2). 
\par
(3)$\Rightarrow$(4).
It is easy to see that $P^\bp$ is a face of $P$. Hence, $Q^\bp$ is a face 
of $Q$. This has two consequences. First, $Q^\bp$ is linearly compact and 
secondly, $\ext[Q^\bp]\subset\ext Q\subset P_1$. The claim follows then from 
Proposition~\ref{pro:ext-fin}~(1) when $K:=Q^\bp$ and 
$\alpha\colon Q^\bp\to\R^d$, $\mu\mapsto\int\bp\dd\mu$. 
\end{proof}
The direct proof of (1)$\Rightarrow$(3) is much simpler than the proofs
(1)$\Rightarrow$(2)$\Rightarrow$(3) in Theorem~\ref{thm:moment-char-ext}. 
The affine independence property is also a sufficient condition for measures 
in $G(C)$ to be extreme points:
\par
\begin{theorem}[Sufficient condition]\label{thm:moment-char-fin}~
\begin{enumerate}
\item 
If $C$ is a singleton, then $G_\ait(C)\cap Q\subset\ext G(C)$.
\item
If $C$ is a singleton and $P_1\subset Q$, then $G_\ait(C)\subset\ext G(C)$.
\end{enumerate}
\end{theorem}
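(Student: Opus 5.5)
The plan is to deduce both parts from the injectivity criterion, Theorem~\ref{thm:ext}~(2). We apply it with $K:=Q^\bp$, the affine map $\alpha\colon Q^\bp\to\R^d$, $\mu\mapsto\int\bp\dd\mu$, and the singleton $C=\{\bq\}$. Then $H=\alpha^{-1}(C)=G(C)$. Part~(2) follows from part~(1), because $P_1\subset Q$ and convexity of $Q$ give $P\!_\fint=\conv P_1\subset Q$, so $G_\ait(C)\cap Q=G_\ait(C)$. Hence the work is entirely in part~(1).

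For part~(1), fix $\mu\in G_\ait(C)\cap Q$. Write $\mu=\lambda_1\delta_{x_1}+\dots+\lambda_k\delta_{x_k}$ with distinct $x_i$, weights $\lambda_i>0$, and $\bp(x_1),\dots,\bp(x_k)$ affinely independent. Since $\mu$ is finitely atomic, every $p_i$ is $\mu$-integrable, so $\mu\in Q^\bp$ and $\mu\in H$. By Theorem~\ref{thm:ext}~(2), it suffices to show that $\alpha$ is injective on $F_{Q^\bp}(\mu)$. We cannot use Proposition~\ref{pro:ext-fin}~(2) here, since $Q$ need not be neighborly and the $\delta_{x_i}$ need not lie in $Q$. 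Instead we will show directly that $F_{Q^\bp}(\mu)\subset\conv\{\delta_{x_1},\dots,\delta_{x_k}\}$.

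To prove this inclusion, take $\nu\in F_{Q^\bp}(\mu)$. By Proposition~\ref{pro:Alfsen} there is $\epsilon>0$ with $\mu+\epsilon(\mu-\nu)\in Q^\bp\subset P$. This signed measure must be nonnegative, so $\epsilon\nu\le(1+\epsilon)\mu$ setwise. Consequently $\nu(X\setminus\{x_1,\dots,x_k\})=0$, and $\nu$ is a probability measure concentrated on $\{x_1,\dots,x_k\}$, that is, $\nu=\sum_i\nu_i\delta_{x_i}$ with $\nu_i\ge0$ and $\sum_i\nu_i=1$. Here we use that $\cF$ contains the singletons.

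On $\conv\{\delta_{x_1},\dots,\delta_{x_k}\}$ the map $\alpha$ sends $\sum_i\nu_i\delta_{x_i}$ to $\sum_i\nu_i\bp(x_i)$. This map is injective precisely because the $\bp(x_i)$ are affinely independent and the coefficients $\nu_i$ sum to one. Therefore $\alpha|_{F_{Q^\bp}(\mu)}$ is injective, and Theorem~\ref{thm:ext}~(2) gives $\mu\in\ext G(C)$.

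The main obstacle is the containment of the smallest face in the span of the atoms, which is exactly where the simplex hypothesis of earlier proofs is avoided. Alfsen's description $F_K(x)=S_K(x)$ handles it, because it reduces the claim to the positivity of measures.
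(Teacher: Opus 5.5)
Your proof is correct, and it takes a genuinely different route from the paper's.

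The paper does not apply Theorem~\ref{thm:ext}~(2) to $K=Q^\bp$ directly. Instead it argues in three steps:
\begin{enumerate}
\item It applies Proposition~\ref{pro:ext-fin}~(2) to $K=P\!_\fint$, which is $(d+1)$-neighborly with $\ext P\!_\fint=P_1$, to get $G_\ait(C)\subset\ext G_\fint(C)$.
\item It uses that $P\!_\fint$ is a face of $P$ to conclude that $G_\fint(C)\cap Q$ is a face of $G(C)$.
\item It chains $G_\ait(C)\cap Q\subset\ext[G_\fint(C)]\cap Q\subset\ext[G_\fint(C)\cap Q]\subset\ext G(C)$.
\end{enumerate}
Part~(2) is then deduced exactly as you do.

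Your version replaces the paper's global structural input with a local computation. The paper relies on $P$ being a simplex (Yosida--Hewitt) and on convex hulls of extreme points of a simplex being faces, which together make $P\!_\fint$ a neighborly face of $P$. You instead use Alfsen's $F_K(x)=S_K(x)$ and positivity of measures: $\nu\in F_{Q^\bp}(\mu)$ forces $\epsilon\nu\leq(1+\epsilon)\mu$. So the smallest face lies inside the simplex spanned by the atoms of $\mu$, where affine independence of the $\bp(x_i)$ gives injectivity. This is more elementary and self-contained, and it correctly avoids needing the $\delta_{x_i}$ to lie in $Q$.

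What the paper's detour buys is the byproduct that $G_\fint(C)\cap Q$ is a face of $G(C)$, equal to $G_\fint(C)$ when $P_1\subset Q$. The paper reuses this in Corollary~\ref{cor:fin-atomic-ext}~(1) and in the facial-geometry step of Theorem~\ref{thm:RTR}. Your domination idea would yield that face property just as easily: if $\mu=(1-\lambda)\eta+\lambda\xi$ with $\mu$ finitely atomic, then $\eta\leq\mu/(1-\lambda)$, so $\eta$ is concentrated on the atoms of $\mu$. This is presumably one of the ``more elementary proofs'' alluded to in Remark~\ref{rem:measures-basics}~(3).
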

\begin{proof}
(1) First, $G_\ait(C)\subset\ext G_\fint(C)$ follows from 
Proposition~\ref{pro:ext-fin}~(2) applied to $K:=P\!_\fint$ and 
$\alpha\colon P\!_\fint\to\R^d$, $\mu\mapsto\int\bp\dd\mu$, because 
$\ext P\!_\fint=P_1$ and because $P\!_\fint$ is $(d+1)$-neighborly by
Remark~\ref{rem:measures-basics}~(3). Secondly, as $P\!_\fint$ is a 
face of $P$, the set 
\[
P\!_\fint\cap Q = P\!_\fint\cap Q^\bp
\]
is a face of $Q^\bp$. Hence $G_\fint(C)\cap Q$ is a face of $G(C)$
as it follows from imposing the affine constraints. Combining the two, 
we have
\[
G_\ait(C)\cap Q
\subset 
\ext[G_\fint(C)]\cap Q
\subset
\ext[G_\fint(C)\cap Q]
\subset
\ext G(C).
\]
Part~(2) is a corollary of part~(1) as 
$P_1\subset Q$ implies $G_\ait(C)\subset Q$.
\end{proof}
\begin{corollary}[Finitely atomic measures]\label{cor:fin-atomic-ext}~
\begin{enumerate}
\item 
The constrained set $G_\fint(C)$ is a face of $G(C)$ if $P_1\subset Q$.
\item
We have $\ext G_\fint(C)\subset G_\ait(C)$.
\item
We have $\ext G_\fint(C)=G_\ait(C)$ if $C$ is a singleton.
\end{enumerate}
\end{corollary}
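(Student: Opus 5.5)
All three parts come from specializing the earlier results to $Q:=P\!_\fint$, possibly after a facial reduction. We use $K:=P\!_\fint$ with the affine map $\alpha\colon P\!_\fint\to\R^d$, $\mu\mapsto\int\bp\dd\mu$. This is well defined because every $p_i$ is integrable with respect to a finitely atomic measure. Thus $P\!_\fint^\bp=P\!_\fint$ and $\alpha^{-1}(C)=G_\fint(C)$. Remark~\ref{rem:measures-basics}~(3) gives that $P\!_\fint$ is a linearly compact, $k$-neighborly simplex for every $k\in\N$, with $\ext P\!_\fint=P_1$.

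For (1), assume $P_1\subset Q$. Since $Q$ is convex, $P\!_\fint=\conv P_1\subset Q$, and since finitely atomic measures integrate $\bp$, in fact $P\!_\fint\subset Q^\bp$. The set $P\!_\fint$ is a face of $P$, so $P\!_\fint=P\!_\fint\cap Q^\bp$ is a face of $Q^\bp$. This is exactly the observation already used in the proof of Theorem~\ref{thm:moment-char-fin}. Intersecting with the affine constraint preserves faces: if $F$ is a face of $K$, then $F\cap\alpha^{-1}(C)$ is a face of $\alpha^{-1}(C)$. Indeed, if a point of $F\cap H$ lies in the open segment between $a,b\in H$, then $a,b\in F$ because $F$ is extreme in $K$. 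Hence $G_\fint(C)=P\!_\fint\cap G(C)$ is a face of $G(C)$.

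For (2), apply Theorem~\ref{thm:moment-char-ext} in the case $Q:=P\!_\fint$. Condition (3) of that theorem holds, since $P\!_\fint$ is linearly compact with $\ext P\!_\fint=P_1$. Therefore $\ext G(C)\subset G_\ait(C)$, where $G(C)$ is now formed with $Q=P\!_\fint$, so that $G(C)=G_\fint(C)$. Alternatively, one can apply Proposition~\ref{pro:ext-fin}~(1) directly to $K=P\!_\fint$. It yields that an extreme point is a convex combination of at most $d+1$ point measures $\delta_{x_1},\dots,\delta_{x_k}$ whose images $\bp(x_1),\dots,\bp(x_k)$ are affinely independent. The only point to check is that the $x_j$ are mutually distinct, which follows from the affine independence of their images. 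Such a measure lies in $P\!_\ait$.

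For (3), it remains to prove the reverse inclusion when $C$ is a singleton. Theorem~\ref{thm:moment-char-fin}~(1) with $Q:=P\!_\fint$ gives $G_\ait(C)\cap P\!_\fint\subset\ext G_\fint(C)$. Since $G_\ait(C)\subset P\!_\fint$, this is the claim; it is also exactly the first step in the proof of that theorem. No step here looks like a real obstacle. The only care needed is the bookkeeping in (1), namely that $P\!_\fint\subset Q^\bp$ and that faces pass to constraint sets. I would state the latter as a one-line argument rather than rely on the ``as it follows from imposing the affine constraints'' phrase.
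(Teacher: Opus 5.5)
Your proposal is correct and matches the paper's proof. Part (1) is the facial observation from the proof of Theorem~\ref{thm:moment-char-fin}, and parts (2) and (3) specialize Theorem~\ref{thm:moment-char-ext}~(3) and Theorem~\ref{thm:moment-char-fin} to $Q=P\!_\fint$ via Remark~\ref{rem:measures-basics}~(3). The only differences are cosmetic: you invoke part (1) rather than part (2) of Theorem~\ref{thm:moment-char-fin}, which is equivalent here since $G_\ait(C)\subset P\!_\fint$, and you spell out why faces pass to constraint sets, which the paper leaves implicit.
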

\begin{proof}
Part~(1) is revealed in the proof of Theorem~\ref{thm:moment-char-fin}. 
Part~(2) and~(3) follow from Theorem~\ref{thm:moment-char-ext}~(3) and 
Theorem~\ref{thm:moment-char-fin}~(2) when $Q=P_\fint$ by 
Remark~\ref{rem:measures-basics}~(3).
\end{proof}
Theorem~\ref{thm:moment-char-fin} makes essentially one single assumption: that $C$ 
is a singleton. This is addressed in Example~\ref{exa:counter}~(2b) and~(2c) above. 
We now study the assumptions of Theorem~\ref{thm:moment-char-ext}.
\par
\begin{definition}\label{def:Borel}
The \emph{$\sigma$-algebra generated by} a family of subsets of $X$ is the smallest 
$\sigma$-algebra on $X$ that includes this family. A $\sigma$-algebra on $X$ is 
\emph{countably generated} if it is generated by a countable family of subsets of 
$X$. We assume topological spaces to be Hausdorff. The \emph{Borel $\sigma$-algebra} 
on a topological space $X$, denoted by $\cB(X)$, is the $\sigma$-algebra generated 
by the open sets. Its elements are \emph{Borel sets} and the measures on $\cB(X)$ 
are \emph{Borel measures}.
\end{definition}
\begin{remark}[On $\ext P=P_1$]\label{rem:two-valued}
A measure $\mu\in P$ is two-valued if and only if $\mu$ is an extreme point of $P$ by
Remark~\ref{rem:measures-basics}~(2). Hence, $\ext P=P_1$ holds if and only if every 
two-valued probability measure on $\cF$ is a point measure. This is true if
\begin{enumerate}
\item
$\cF$ is countably generated \cite[p.~14]{bhaskara-rao-rao81}, see also 
\cite[Section~6.10]{schilling-kuehn21}. For example, the Borel $\sigma$-algebra $\cB(X)$ 
is countably generated if $X$ is
\begin{enumerate}
\item
a separable metric space \cite[Theorem~11 in Chapter~4]{kelley75}.
\item
a Souslin space%
\footnote{A \emph{Polish space} is a topological space homeomorphic to 
a complete separable metric space. A \emph{Souslin space} is a Hausdorff 
space that is the image of a Polish space under a continuous map. Every 
Borel set in a Souslin space is itself a Souslin space 
\cite[p.~96]{schwartz73}. For example, every Borel set in the 
Polish space $\R^n$ is a Souslin space.}
\cite[p.~108]{schwartz73}. 
\end{enumerate}
\item
$\cF=\cB(X)$ for a metric space $X$ of the cardinality of the continuum 
\cite{marczewski-sikorski48}. An example is the nonseparable Banach space 
$X=\ell^\infty$ of bounded real sequences (see \cite{jech03}, Chapter~3).
\end{enumerate}
See \cite{winkler88}, Example~2.1, for further spaces of probability measures 
with extreme point set $P_1$.
\end{remark}
\begin{example}[Borel $\sigma$-algebras with $\ext P\neq P_1$]
The measure in Example~\ref{rem:co-countable} is a Borel measure for the topology on $\R$ 
whose closed sets are at most countable. This topology is not Hausdorff. But there are even 
compact Hausdorff spaces that admit two-valued Borel measures that are no point measures. 
An example is the \emph{Dieudonn\'{e} measure} \cite[Example~7.1.3]{bogachev07-vol2}. 
\end{example}
We present a criterion to identify faces of the probability simplex $P$.
\par
\begin{proposition}\label{pro:face-crit}
Let $\cN$ be a set of pairs $((A_i)_{i\in I},A)$ where $(A_i)_{i\in I}\subset\cF$ 
is an increasing net upper bounded by $A\in\cF$ (the directed set $I$ can 
depend on the pair). Then the set of probability measures $\mu\in P$ that satisfy 
$\mu(A)=\sup_{i\in I}\mu(A_i)$ for all $((A_i)_{i\in I},A)\in\cN$ is a face of $P$.
\end{proposition}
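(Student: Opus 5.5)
Let $Q_\cN$ denote the set described in the statement. We verify directly that $Q_\cN$ is convex and extreme. Both checks rest on two facts. First, for an increasing net $(A_i)_{i\in I}$ bounded by $A$, monotonicity of measures makes $i\mapsto\mu(A_i)$ an increasing net of reals bounded above by $\mu(A)$. Hence $\sup_{i}\mu(A_i)=\lim_i\mu(A_i)\le\mu(A)$ for every $\mu\in P$. Second, the defining condition is equivalent to
\[
\lim_{i}\,\mu(A\setminus A_i)=0,
\]
where $\mu(A\setminus A_i)=\mu(A)-\mu(A_i)\ge 0$ is a decreasing net. This reformulation is convenient because it is a condition on a nonnegative quantity that is linear in $\mu$.

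\emph{Convexity.} Let $\mu,\nu\in Q_\cN$, $\lambda\in[0,1]$, and $\rho:=(1-\lambda)\mu+\lambda\nu$. For each pair $((A_i),A)\in\cN$ we have
\[
\rho(A\setminus A_i)=(1-\lambda)\mu(A\setminus A_i)+\lambda\nu(A\setminus A_i).
\]
Limits of nets are linear, so the right-hand side tends to $0$. Thus $\rho\in Q_\cN$.

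\emph{Extremality.} Let $\mu,\nu\in P$ and $\lambda\in(0,1)$ be such that $\rho:=(1-\lambda)\mu+\lambda\nu\in Q_\cN$. Fix a pair $((A_i),A)\in\cN$. Since both summands are nonnegative,
\[
0\le(1-\lambda)\,\mu(A\setminus A_i)\le\rho(A\setminus A_i)\longrightarrow 0,
\]
and the same bound holds with $\lambda\,\nu(A\setminus A_i)$ in place of the middle term. Dividing by $1-\lambda>0$ and $\lambda>0$ respectively gives $\mu(A\setminus A_i)\to0$ and $\nu(A\setminus A_i)\to0$, so $\mu,\nu\in Q_\cN$. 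Therefore $Q_\cN$ is an extreme set, and together with convexity it is a face of $P$. If $\cN$ is empty, then $Q_\cN=P$, which is trivially a face.

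The only step needing care is replacing the supremum by a limit of the nonnegative decreasing net $\mu(A\setminus A_i)$. This replacement is what makes the positivity squeeze in the extremality step work. It relies on directedness of $I$ and monotonicity of the net, and on the fact that all measures here are finite, so that subtracting $\mu(A_i)$ from $\mu(A)$ is legitimate. With this reformulation, the rest is routine.
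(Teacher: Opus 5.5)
Your proof is correct and follows essentially the paper's route. The paper also uses that $\sup_i\mu(A_i)$ is a limit of an increasing net and hence affine in $\mu$. It combines this with $\sup_i\eta(A_i)\le\eta(A)$ and $\sup_i\xi(A_i)\le\xi(A)$ to show that $\mu(A)=\sup_i\mu(A_i)$ for $\mu=(1-\lambda)\eta+\lambda\xi$ holds if and only if the same equality holds for both $\eta$ and $\xi$, which gives convexity and extremality in a single equivalence. Your reformulation via $\mu(A\setminus A_i)\to 0$ and the separate positivity squeeze is just a repackaging of that equivalence.
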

\begin{proof}
Let $\mu,\eta,\xi\in P$, $\lambda\in(0,1)$, and $\mu=(1-\lambda)\eta+\lambda\xi$. 
Let $((A_i)_{i\in I},A)\in\cN$. Since $(A_i)_{i\in I}$ is an increasing net, we
have
\[\textstyle
\sup_{i\in I}\mu(A_i)
=(1-\lambda)\sup_{i\in I}\eta(A_i)
+\lambda\sup_{i\in I}\xi(A_i).
\]
Since $A$ is an upper bound to $(A_i)_{i\in I}$, this implies that 
$\mu(A)=\sup_{i\in I}\mu(A_i)$ is equivalent to the conjunction of 
\[\textstyle
\eta(A)=\sup_{i\in I}\eta(A_i) 
\quad\text{and}\quad 
\xi(A)=\sup_{i\in I}\xi(A_i).
\qedhere
\]
\end{proof}
\begin{definition}[Special Borel measures]\label{def:spec-Borel}
Let $\mu$ be a Borel measure on a topological space $X$. Then $\mu$ is a 
\emph{Radon measure}%
\footnote{Radon measures are also known as \emph{tight} measures \cite{topsoe70}.}
\cite{schwartz73} if $\mu$ is inner regular on compact sets, that is to say, the 
measure $\mu(A)$ of any $A\in\cB(X)$ is the supremum of $\mu(B)$ over all compact 
subsets $B\subset A$. The measure $\mu$ is \emph{$\tau$-smooth}%
\footnote{It is well known that every Radon measure is $\tau$-smooth
\cite[Proposition~7.2.9]{bogachev07-vol2}. Also, for every $\tau$-smooth 
measure $\mu$ there exists a smallest closed set $A$ of full measure 
$\mu(A)=\mu(X)$, called the \emph{support} of $\mu$.}
if $\mu(\bigcup_{i\in I}A_i)=\sup_{i\in I}\mu(A_i)$ for every increasing net 
$(A_i)_{i\in I}$ of open sets \cite{topsoe70,winkler85}. Let $R$ and $P\!_\tau$
be the sets of Radon and $\tau$-smooth probability measures on $X$, respectively.
\end{definition}
\begin{corollary}[Radon and $\tau$-smooth measures]\label{cor:Borel-faces}
Let $X$ be a topological space.
\begin{enumerate}
\item 
The sets $R$ and $P\!_\tau$ are faces of the simplex $P$ of Borel probability measures.
They are linearly compact, $k$-neighborly simplices for every $k\in\N$. 
\item 
The extreme points are $\ext R=\ext P\!_\tau=P_1$ (Tops{\o}e).
\end{enumerate}
Consider moment constraints on the ambient space $Q=R$ or $Q=P\!_\tau$:
\begin{enumerate}
\item[(3)]
We have $\ext G(C)\subset G_\ait(C)$.
\item[(4)] 
We have $\ext G(C)=G_\ait(C)$ if $C$ is a singleton.
\item[(5)]
The constrained set $G_\fint(C)$ of finitely atomic measures is a face of $G(C)$.
\end{enumerate}
\end{corollary}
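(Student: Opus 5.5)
The plan is to reduce everything to Proposition~\ref{pro:face-crit}, Lemma~\ref{lem:basic-convexity}, Theorems~\ref{thm:moment-char-ext} and~\ref{thm:moment-char-fin}, and Corollary~\ref{cor:fin-atomic-ext}. The only genuinely measure-theoretic step is (2).

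For (1), I would encode both $R$ and $P_\tau$ by a family $\cN$ as in Proposition~\ref{pro:face-crit}. For $P_\tau$, take $\cN$ to be all pairs $((A_i)_{i\in I},\bigcup_i A_i)$ with $(A_i)$ an increasing net of open sets; the union is open, hence Borel, and is an upper bound. For $R$, fix $A\in\cB(X)$ and let $I$ be the set of compact subsets of $A$, directed by inclusion, with $A_B:=B$. Compact sets are closed in a Hausdorff space, hence Borel, and $\mu(A)=\sup_B\mu(B)$ is exactly inner regularity. Proposition~\ref{pro:face-crit} then gives that both sets are faces of $P$. Since $P$ is a simplex by Remark~\ref{rem:measures-basics}~(1), Lemma~\ref{lem:basic-convexity}~(3a) shows that $R$ and $P_\tau$ are simplices. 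Lemma~\ref{lem:basic-convexity}~(1) and~(2) then give that they are $k$-neighborly for every $k$ and linearly compact.

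For (2), every point measure is Radon, since $\delta_x(A)=\delta_x(\{x\})$ whenever $x\in A$ and singletons are compact. Radon measures are $\tau$-smooth. So $P_1\subset R\subset P_\tau$. By Remark~\ref{rem:measures-basics}~(2), $P_1\subset\ext P$, and because the faces $R$ and $P_\tau$ satisfy $\ext R=R\cap\ext P$ and $\ext P_\tau=P_\tau\cap\ext P$, we get $P_1\subset\ext R\subset \ext P_\tau$.

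The main obstacle is the converse: a two-valued $\tau$-smooth $\mu$ is a point measure. I would use the support from the footnote, the smallest closed set $S$ with $\mu(S)=1$, and show that $S$ is a singleton.
\begin{itemize}
\item $S$ is nonempty, since $\mu(\emptyset)=0$.
\item If $x\neq y$ both lie in $S$, choose disjoint open neighbourhoods $U\ni x$ and $V\ni y$ by the Hausdorff property. Minimality of $S$ forces $\mu(U)>0$ and $\mu(V)>0$; otherwise $S\setminus U$ or $S\setminus V$ would be a smaller closed set of full measure.
\item Two-valuedness then gives $\mu(U)=\mu(V)=1$, contradicting disjointness.
\end{itemize}
Hence $S=\{x\}$, and $\mu(\{x\})=1$ gives $\mu=\delta_x$. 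If the existence of the support needs justification, I would take the union $O$ of all open $\mu$-null sets, directed by finite unions. Then $\tau$-smoothness gives $\mu(O)=0$, and $S=X\setminus O$. This is Tops{\o}e's result and I would cite it as such.

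For (3)--(5), take $Q=R$ or $Q=P_\tau$. By (1) and (2), $Q$ is a face of $P$ with $\ext Q\subset P_1$, so Theorem~\ref{thm:moment-char-ext}~(1)$\Rightarrow$(4) yields (3). Since $P_1\subset Q$, Theorem~\ref{thm:moment-char-fin}~(2) gives $G_\ait(C)\subset\ext G(C)$ for singletons $C$; combined with (3) this gives (4). Finally, $P_1\subset Q$ together with Corollary~\ref{cor:fin-atomic-ext}~(1) gives (5).
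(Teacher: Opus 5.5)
Your proposal is correct and follows the paper's proof essentially step by step. It uses Proposition~\ref{pro:face-crit} for the face property, Remark~\ref{rem:measures-basics}~(1) with Lemma~\ref{lem:basic-convexity} for the simplex properties, and Theorem~\ref{thm:moment-char-ext}, Theorem~\ref{thm:moment-char-fin}~(2) and Corollary~\ref{cor:fin-atomic-ext}~(1) for parts (3)--(5). The only difference is in (2): the paper cites Tops{\o}e for $\ext P\!_\tau=P_1$ and gets $\ext R=P_1$ because $R$ is a face of $P\!_\tau$, whereas you also give a valid self-contained proof of Tops{\o}e's result, using the support of a two-valued $\tau$-smooth measure and the Hausdorff property.
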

\begin{proof}
Part~(1). That $R$ is a face of $P$ follows from Proposition~\ref{pro:face-crit} because
the compact subsets of a Borel set are directed by inclusion, as the union of two compact 
sets is compact. Regarding the $\tau$-smooth measures, we note that every net of open 
sets is upper bounded by its union, which is open, hence Borel. The remaining statements 
follow from Remark~\ref{rem:measures-basics}~(1) and Lemma~\ref{lem:basic-convexity}.
\par
Part~(2). Tops{\o}e \cite[Theorem~11.1]{topsoe70} has shown $\ext P\!_\tau=P_1$.
The set $R$ of Radon measures is a face of $P$ by (1). Hence $R$ is a face of $P_\tau$.
This shows $\ext R\subset P_1$, hence $\ext R=P_1$.
\par
Part~(3) follows from Theorem~\ref{thm:moment-char-ext}~(1) as $Q$ is a face of 
$P$ and $\ext Q=P_1$, by part~(1) and~(2).
\par
Part~(4) is implied by part~(3) and Theorem~\ref{thm:moment-char-fin}~(2),
because $P_1\subset Q$ by part~(2).
\par
Part~(5) follows from Corollary~\ref{cor:fin-atomic-ext}~(1) as $P_1\subset Q$
by part~(2). 
\end{proof}
We recall that every Borel measure on any Borel set in $\R^n$, or more generally on any 
Souslin space, is a Radon measure \cite[p.~122]{schwartz73}, hence $R=P\!_\tau=P$ 
holds for these spaces.
\par
\begin{remark}[Summary]\label{rem:ext-moment-contribution}
The extreme points of the convex set $G(C)$ defined by $d<\infty$ many moment constraints 
on a convex set $Q\subset P$ have been characterized for the simplex $Q=R$ of Radon measures 
in \cite[Proposition~5]{weizsaecker-winkler80} and more generally for simplices $Q$ with 
the extreme point set $P_1$ in \cite[Theorem~2.1]{winkler88}. 
(The set $R$ of Radon probability measures is indeed a simplex with extreme point set $P_1$ 
by Corollary~\ref{cor:Borel-faces}~(1) and~(2) above.)
\begin{enumerate}
\item
We advance the sufficient condition for points of $G(C)$ to be extreme points by proving 
that the assumption of $Q$ being a simplex is a redundant. This is achieved in 
Theorem~\ref{thm:moment-char-fin}, whose main idea is that the set $P\!_\fint$ of finitely 
atomic measures is a face of $P$.
\item  
Regarding the converse, necessary condition, we introduce the novel assumption that $Q$ is 
a face of $P$ in Theorem~\ref{thm:moment-char-ext}. This assumption is arguably quite strong, 
even stronger than $Q$ being a simplex. Its advantage is that no further assumptions are 
needed provided $\ext P=P_1$ (see Remark~\ref{rem:two-valued} for examples). Important 
examples of faces are the sets $R$ and $P\!_\tau$ of Radon and $\tau$-smooth measures, which 
are faces of the simplex $P$ of Borel probability measures by Proposition~\ref{pro:face-crit} 
and Corollary~\ref{cor:Borel-faces}.
\end{enumerate}
\end{remark}
\begin{remark}[Compact metric spaces]\label{rem:Karr}
Karr \cite{karr83} proves $\ext G(\bq)=G_\ait(\bq)$, $\bq\in\R^d$, for the set $Q=P$ of Borel 
probability measures on a compact metric space $X$ assuming $\bp\colon X\to\R^d$ is continuous. 
His proof uses the idea  \cite{douglas64} that $\mu\in G(\bq)$ is an extreme point if and only 
if $p_1,\dots,p_d$ and the constant map $X\to\R$, $x\mapsto 1$ span the Lebesgue space $L_1(\mu)$. 
Karr's result follows also from Theorem~\ref{thm:moment-char-ext}~(1) and 
Theorem~\ref{thm:moment-char-fin}~(2) as every compact metric space is a Polish space 
\cite[Theorem~32 in Chapter~6]{kelley75}, so $\ext P=P_1$ holds by Remark~\ref{rem:two-valued}~(1a).
\end{remark}
%
%
\subsection{Linear programs on measures}
\label{sec:measure-lp}
We study the optimal value of an integral functional on the constraint set $G(C)$ 
defined by finitely many moment constraints on a convex set $Q$ of probability 
measures. This optimal value is known to coincide with the optimal value on the 
extreme points of $G(C)$ and with the optimal value on finitely atomic measures 
having the affine independence property, if $C$ is a closed convex set and $Q=R$ 
is the simplex of Radon probability measures \cite{weizsaecker-winkler80,winkler88}. 
We generalize this result by replacing the assumption $Q=R$ with the much weaker
assumption that $Q$ includes the set $P_1$ of point measures.
\par
\begin{definition}[Generalized moment problems]\label{def:moment-problems}
Building on Definition~\ref{def:moment-constraints}, we assume that $\bp\colon X\to\R^d$ 
and $f\colon X\to(-\infty,+\infty]$ are measurable with respect to the $\sigma$-algebra 
$\cF$. We define the linear optimization problems
\begin{align}
\label{eq:PM-atomic}
g_\fint(C) & \textstyle
:= \inf\{ \int f\dd\mu : \mu\in G_\fint(C) \},\\
\label{eq:PM-aff-ind}
g_\ait(C) & \textstyle
:= \inf\{ \int f\dd\mu : \mu\in G_\ait(C) \}
\end{align}
on the spaces $G_\ait(C)\subset G_\fint(C)$ of finitely atomic measures. If $f$ is 
bounded below by a real constant, then we also define
\begin{align}
\label{eq:PM}
g(C) & \textstyle
:= \inf\{ \int f\dd\mu : \mu\in G(C) \},\\
\label{eq:PM-ext}
g_\extt(C) & \textstyle
:= \inf\{ \int f\dd\mu : \mu\in \ext G(C) \}.
\end{align}
The optimization problem $g(\bq)$ in \eqref{eq:PM} at a specific point $\bq\in\R^d$ 
is an example of the problem of moments \cite{shapiro01}. This problem is also known 
as the generalized moment problem if $\cF=\cB(X)$ is the Borel $\sigma$-algebra on a Borel 
\cite{lasserre10}, closed \cite{henrion-etal20}, or compact \cite{klerk-laurent19} set 
$X$ in $\R^k$. Loosely speaking, we refer to each of 
\eqref{eq:PM-atomic}--\eqref{eq:PM-ext} as a \emph{generalized moment problem}.
\end{definition}
Clearly, $g(C)\leq g_\extt(C)$. An obvious reason for a strict inequality 
$g(C)<g_\extt(C)$ is a lack of extreme points. Also, $g_\ait(C)\leq g(C)$ 
holds by the Richter-Tchakaloff theorem without any assumptions. 
A strict inequality $g_\ait(C)<g(C)$ can only occur if $P_1\subset Q$ fails.
\par
\begin{example}[No extreme points]
The set $Q$ of Bernoulli distributions with uncertain outcomes is an open segment. 
In absence of constraints 
$G(C)=Q=\{(1-\lambda)\delta_0+\lambda\delta_1:0<\lambda<1\}$ holds. 
So $g_\extt(C)=+\infty$ whereas $g(C)=\min\{f(0),f(1)\}$ is finite if 
$f\colon\{0,1\}\to\R$ is finite. Note that $g_\ait(C)=g(C)$ despite $P_1\not\subset Q$.
\end{example}
\begin{example}[No finitely atomic measures]
The set $Q$ of Borel probability measures absolutely continuous with respect to  
Lebesgue measure on the unit interval $[0,1]$ is a face of the set $P$ of Borel 
probability measures on $[0,1]$, see for example \cite[Lemma 9.1]{weis25}. The 
convex set $Q$ has no extreme points because $\ext P=P_1$ by 
Remark~\ref{rem:two-valued}~(1a). Hence, in absence of constraints,
\[
g_\ait(C)=\inf f<g(C)=0<g_\extt(C)=+\infty,
\]
if $f\colon[0,1]\to\R$ is zero except on a set of Lebesgue measure zero and 
$\inf f<0$.
\end{example}
\begin{figure}
\hfill
\begin{tikzpicture}[every node/.style={draw, rectangle, rounded corners=2mm, inner sep=7pt,
minimum width=5cm, align=center}]
\draw[white](-6.0,0) rectangle (6.0,7.0);
\tkzDefPoint(-6.0,7.0){c11}
\tkzDefPoint(6.0,7.0){c12}
\tkzDefPoint(-6.0,2.4){c21}
\tkzDefPoint(6.0,2.4){c22}
\tkzDefPoint(0,0){c3}
\draw (c11) node[anchor=north west] (n11)
{$C$ is closed};
\draw (c12) node[anchor=north east] (n12)
{$Q$ is the simplex $R$\\of Radon measures};
\draw (c21) node[anchor=south west, rectangle split, rectangle split parts=2] (n21)
{$g_\extt(C)\leq g(C)$\nodepart{two}Weizs\"{a}cker and Winkler's\\integral representation};
\draw (c22) node[anchor=south east, rectangle split, rectangle split parts=2] (n22)
{$g_\ait(C)\leq g_\extt(C)$\nodepart{two}
necessary condition};
\draw (c3) node[anchor=south] (n3) 
{$g(C)=g_\fint(C)=g_\ait(C)=g_\extt(C)$};
\draw[thick,shorten >=2mm,shorten <=2mm,-latex] (n11) -- (n21);
\draw[thick,shorten >=2mm,shorten <=2mm,-latex] (n12.south west) -- (n21.north east);
\draw[thick,shorten >=2mm,shorten <=2mm,-latex] (n12) -- (n22);
\draw[thick,shorten >=2mm,shorten <=2mm,-latex] (n21) -- (n3);
\draw[thick,shorten >=2mm,shorten <=2mm,-latex] (n22) -- (n3);
\end{tikzpicture}%
\hspace*{\fill}
\caption{\label{fig:flowchart1}%
Flowchart for the proof of Theorem~\ref{thm:WW-integral}.
The necessary condition refers to the extreme points of $G(C)$
being finitely atomic measures that have the affine independence
property.}
\end{figure}
\begin{theorem}[Weizs\"{a}cker-Winkler]\label{thm:WW-integral}
Let $C$ be closed and $Q=R$ be the simplex of Radon probability measures on a Hausdorff 
space $X$. If $\bp\colon X\to\R^d$ and $f\colon X\to(-\infty,+\infty]$ are Borel measurable, 
and $f$ is bounded below by a real constant, then $g(C)=g_\fint(C)=g_\ait(C)=g_\extt(C)$.
\end{theorem}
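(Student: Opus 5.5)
The flowchart in Figure~\ref{fig:flowchart1} dictates the structure. I will prove two inequalities, $g_\extt(C)\leq g(C)$ and $g_\ait(C)\leq g_\extt(C)$, and combine them with the trivial comparisons. For the trivial part: $Q=R$ contains $P_1$ by Corollary~\ref{cor:Borel-faces}~(2), so $P_\fint\subset R$. Since every finitely atomic measure integrates $\bp$, we have $G_\ait(C)\subset G_\fint(C)\subset G(C)$. This gives $g(C)\leq g_\fint(C)\leq g_\ait(C)$, and the integrals make sense because $f$ is bounded below. Closing the cycle with the two inequalities yields equality of all four quantities.

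The inequality $g_\ait(C)\leq g_\extt(C)$ is immediate from the necessary condition. By Corollary~\ref{cor:Borel-faces}~(3), valid for $Q=R$ with no assumption on $C$, we have $\ext G(C)\subset G_\ait(C)$. An infimum over a larger set is smaller, so $g_\ait(C)\leq g_\extt(C)$.

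The main obstacle is $g_\extt(C)\leq g(C)$. It requires that every feasible $\mu\in G(C)$ be matched, in cost, by extreme points. I would invoke Weizs\"acker and Winkler's integral representation. For $C$ closed, the set $G(C)$ of Radon probability measures under moment constraints is ``measure convex''. In that setting each $\mu\in G(C)$ is the barycenter of a probability measure $\rho$ on $G(C)$ concentrated on $\ext G(C)$ (more precisely, on a measurable set of extreme points), in the sense that $\int h\dd\mu=\int\!\big(\int h\dd\nu\big)\dd\rho(\nu)$ for all nonnegative Borel $h$.

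Applying this identity to $h=f-c$, where $c$ is a lower bound of $f$, gives $\int f\dd\mu=\int\!\big(\int f\dd\nu\big)\dd\rho(\nu)\geq g_\extt(C)$, since the inner integral is at least $g_\extt(C)$ for $\rho$-almost every $\nu$. The lower bound on $f$ is what makes this legitimate when values $+\infty$ occur. Taking the infimum over $\mu$ yields $g_\extt(C)\leq g(C)$.

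The delicate points all sit in this step:
\begin{itemize}
\item checking the hypotheses of the representation theorem, namely that closedness of $C$ makes $G(C)$ closed under the relevant barycentric integration over Radon measures;
\item confirming that the barycentric identity extends from bounded to lower-bounded functions, which follows by monotone convergence on the truncations $\min(f,n)$.
\end{itemize}
I would cite the representation theorem rather than reprove it, since the paper itself indicates that this is Weizs\"acker and Winkler's route.
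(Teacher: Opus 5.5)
Your proposal is correct and follows the paper's proof: the same cycle $g_\ait(C)\leq g_\extt(C)\leq g(C)\leq g_\fint(C)\leq g_\ait(C)$, with $\ext G(C)\subset G_\ait(C)$ taken from Corollary~\ref{cor:Borel-faces}~(3). The step $g_\extt(C)\leq g(C)$ likewise comes from Weizs\"acker and Winkler's integral representation (their Corollary~2, applied with $I=\{1,\dots,d\}$, $f_i=g_i=p_i$, $\tilde C=C$), extended by monotone convergence to Borel $f$ bounded below. The only cosmetic difference is that the paper's representing measure $\Pi$ lives directly on $\Sigma(\ext G(C))$, so the bound $\int f\dd\nu\geq g_\extt(C)$ holds for every $\nu$, not just almost every one.
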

\begin{proof}
The claim $g_\extt(C)\leq g(C)$ follows by monotone convergence from the integral 
representation for Radon probability measures \cite{weizsaecker-winkler79}. The proof is then 
completed by 
\[
g_\ait(C)\leq g_\extt(C)\leq g(C)\leq g_\fint(C)\leq g_\ait(C),
\]
as $\ext G(C)\subset G_\ait(C)$ holds by Corollary~\ref{cor:Borel-faces}~(3).
See Figure~\ref{fig:flowchart1}.
\par
In more detail, let $f_1,f_2,\dots$ be a countable family of Borel measurable functions 
$X\to\R$. Let $I$ be an index set, $\tilde C\subset\R^I$ a closed convex set, and for 
every $i\in I$ let $g_i\colon X\to\R$ be continuous for the topology generated%
\footnote{%
The topology on $X$ \emph{generated by} a family of subsets of $X$ is the coarsest 
topology for which all members of this family are open. The \emph{initial topology} of 
$f_1,f_2,\dots$ is the topology on $X$ generated by the preimages $f_j^{-1}(U)$, where 
$U$ is any open set in $\R$ and $j\in\N$.}
by the given topology of $X$ and the initial topology of $f_1,f_2,\dots$. To guarantee a 
certain degree of boundedness of the (possibly uncountably many) constraints $g_i$, we 
assume that for every $i\in I$ there is $j\in\N$ and a real constant $K>0$ such that 
$|g_i|\leq K(1+|f_j|)$ pointwise. Here, $|h|\colon X\to\R$ is defined by $|h|(x)=|h(x)|$, 
$x\in X$, for every $h\colon X\to\R$. Let
\[\textstyle
\tilde H:=\{\nu\in R \mid
\text{$\forall j\in\N:f_j\subset L_1(\nu)$ 
and 
$(\int g_i\dd\nu)_{i\in I}\in\tilde C$} \}
\]
and let $\Sigma(\ext\tilde H)$ be the smallest $\sigma$-algebra on $\ext\tilde H$ such that 
$\ext\tilde H\to\R$, $\mu\mapsto\mu(B)$ is measurable for every Borel set $B$ in $X$. Then 
Corollary~2 in \cite{weizsaecker-winkler79} shows that for every measure $\mu\in\tilde H$ 
there exists a probability measure $\Pi$ on $\Sigma(\ext\tilde H)$, such that
\begin{equation}\label{eq:measure}\textstyle
\mu(B)=\int_{\ext\tilde H} \nu(B)\dd\Pi(\nu),
\quad B\in\cB(X).
\end{equation}
The generalized moment problem \eqref{eq:PM} with $Q=R$ fits into this setup when 
$I:=\{1,\dots,d\}$, $f_i:=g_i:=p_i$ for every $i\in I$, and $\tilde C:=C$. Equation 
\eqref{eq:measure} shows that for every $\mu\in G(C)$ there is a probability measure 
$\Pi$ on $\Sigma(\ext G(C))$ such that 
\begin{equation}\label{eq:integral}\textstyle
\int_X f\dd\mu=\int_{\ext G(C)}(\int_X f\dd\nu)\dd\Pi(\nu)
\end{equation}
where $f=1_B$ is the indicator function of a Borel set $B\in\cB(X)$, defined by 
$1_B(x):=\delta_x(B)$, $x\in X$. Equation \eqref{eq:integral} holds for every Borel 
measurable $f\colon X\to(-\infty,+\infty]$ that is bounded below by a real constant, 
by monotone convergence, see for example \cite{bauer01}, Sections~11--12. Hence 
$g_\extt(C)\leq g(C)$ as $\mu\in G(C)$ is arbitrary and because the monotonicity of 
the integral shows
\[\textstyle
g_\extt(C)
=\inf_{\nu\in\ext G(C)}\int_X f\dd\nu
\leq \int_{\ext G(C)}(\int_X f\dd\nu)\dd\Pi(\nu)
= \int_X f\dd\mu.
\qedhere
\]
\end{proof}
\begin{remark}[Compactness]\label{rem:no-compactness}
The integral representation \eqref{eq:measure} is beyond standard Choquet theory \cite{phelps01} 
as it assumes no compactness of $\tilde H$. Of course, the simplex $R$ of Radon probability 
measures is compact in special cases. For example, if $X$ is a compact Hausdorff space, then 
$R$ is weakly-* compact \cite[Corollary~13 in Chapter~21]{royden-fitzpatrick10}. If, in addition, 
the constraints $p_1,\dots,p_d$ are bounded continuous functions and $C\subset\R^d$ is closed, 
then $\tilde H$ is weakly-* compact.
\end{remark}
\begin{figure}
\hfill
\begin{tikzpicture}[every node/.style={draw, rectangle, rounded corners=2mm, inner sep=7pt,
minimum width=5cm, align=center}]
\draw[white](-8.0,0) rectangle (8.0,8.0);
\tkzDefPoint(-3.75,8.0){d01}
\tkzDefPoint(1.0,8.0){d02}
\tkzDefPoint(-3.75,6.2){d11}
\tkzDefPoint(8.0,6.2){d12}
\tkzDefPoint(-8.0,2.0){d21}
\tkzDefPoint(-1.0,2.0){d22}
\tkzDefPoint(6.0,2.0){d23}
\tkzDefPoint(0,0){d3}
\draw (d01) node[anchor=north] (m01)
{$C$ is closed};
\draw (d02) node[anchor=north, minimum width=2.5cm] (m02)
{$Q$ includes\\the set $P_1$\\of point\\measures};
\draw (d11) node[anchor=north, rectangle split, rectangle split parts=2] (m11)
{$g_{\ext,\fint}(C)\leq g_\fint(C)$\nodepart{two}Bauer's theorem};
\draw (d12) node[anchor=north east, rectangle split, rectangle split parts=2] (m12)
{$g_\fint(C)\leq g(C)$\nodepart{two}Richter-Tchakaloff\\theorem};
\draw (d21) node[anchor=south west, rectangle split, rectangle split parts=2, minimum width=4cm] (m21)
{$g_\ait(C)\leq g_\fint(C)$\nodepart{two}necessary condition};
\draw (d22) node[anchor=south, rectangle split, rectangle split parts=2, minimum width=4cm] (m22)
{$g_\extt(C)\leq g_\fint(C)$\nodepart{two}facial geometry};
\draw (d23) node[anchor=south east, minimum width=4cm] (m23)
{$g_\fint(C)=g(C)$};
\draw (d3) node[anchor=south] (m3) 
{$g(C)=g_\fint(C)=g_\ait(C)=g_\extt(C)$};
\draw[thick,shorten >=2mm,shorten <=2mm,-latex] (m01) -- (m11);
\draw[thick,shorten >=2mm,shorten <=2mm,-latex] (m02) -- ([xshift=-1.0cm]m23.north);
\draw[thick,shorten >=2mm,shorten <=2mm,-latex] (m02) -- (m22);
\draw[thick,shorten >=2mm,shorten <=2mm,-latex] (m11) -- (m22);
\draw[thick,shorten >=2mm,shorten <=2mm,-latex] (m11) -- (m21);
\draw[thick,shorten >=2mm,shorten <=2mm,-latex] (m12) -- ([xshift=-1.0cm]m23.north);
\draw[thick,shorten >=2mm,shorten <=2mm,-latex] (m23.south west) -- (m3);
\draw[thick,shorten >=2mm,shorten <=2mm,-latex] (m22) -- (m3);
\draw[thick,shorten >=2mm,shorten <=2mm,-latex] (m21.south east) -- (m3);
\end{tikzpicture}%
\hspace*{\fill}
\caption{\label{fig:flowchart2}%
Flowchart for the proof of Theorem~\ref{thm:RTR}. Bauer's theorem is applied to 
constrained $k$-simplices, $k\in\N_0$, and $g_{\ext,\fint}(C)$ is the optimal 
value on the extreme points of $G_\fint(C)$. The necessary condition refers to 
the extreme points of $G_\fint(C)$ having the affine independence property. }
\end{figure}
We generalize Theorem~\ref{thm:WW-integral}. Referring to $\bp\colon X\to\R^d$ and 
$f\colon X\to(-\infty,+\infty]$ as arbitrary maps, we tacitly understand that $\cF$ 
is the discrete $\sigma$-algebra (the set of all subsets of $X$).
\par
\begin{theorem}\label{thm:RTR}
Let $C$ be closed and $P_1\subset Q$. If $\bp\colon X\to\R^d$ and 
$f\colon X\to(-\infty,+\infty]$ are arbitrary maps then $g_\fint(C)=g_\ait(C)$. If 
$\bp$ and $f$ are measurable with respect to $\cF$, and $f$ is bounded below by a 
real constant, then $g(C)=g_\fint(C)=g_\ait(C)=g_\extt(C)$.
\end{theorem}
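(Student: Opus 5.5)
We follow the flowchart in Figure~\ref{fig:flowchart2}.

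\emph{First part: $g_\fint(C)=g_\ait(C)$.} Since $G_\ait(C)\subset G_\fint(C)$, only $g_\fint(C)\le g_\ait(C)$ needs work. Fix $\mu\in G_\fint(C)$ with $\int f\dd\mu<+\infty$. Let $\mu$ be concentrated on $\{x_1,\dots,x_k\}$, let $\Delta:=\conv\{\delta_{x_1},\dots,\delta_{x_k}\}$, a $(k-1)$-simplex and a face of $P\!_\fint$, and put $\Delta_C:=\{\nu\in\Delta:\int\bp\dd\nu\in C\}$. This set is the preimage of the closed set $C$ under an affine, hence continuous, map on the compact finite-dimensional set $\Delta$. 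So $\Delta_C$ is compact and convex, and it contains $\mu$. The functional $\nu\mapsto\int f\dd\nu=\sum_i\nu(\{x_i\})f(x_i)$ is affine on $\Delta$ with values in $(-\infty,+\infty]$. To avoid the value $+\infty$, we restrict to the face of $\Delta$ spanned by the atoms with $f(x_i)<\infty$; all of $\mu$'s atoms lie there because $\int f\dd\mu<\infty$. By Bauer's theorem in its simplest form (equivalently, Minkowski's theorem on $\Delta_C$), the minimum is attained at some $\nu\in\ext\Delta_C$, with $\int f\dd\nu\le\int f\dd\mu$. Since $\Delta$ is a face of $P\!_\fint$, $\Delta_C$ is a face of $G_\fint(C)$, so $\nu\in\ext G_\fint(C)\subset G_\ait(C)$ by Corollary~\ref{cor:fin-atomic-ext}~(2). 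Hence $g_\ait(C)\le g_\fint(C)$.

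\emph{Second part.} We chain the inequalities
\[
g_\ait(C)\le g(C)\le g_\fint(C)=g_\ait(C),\qquad g(C)\le g_\extt(C)\le g_\fint(C).
\]
The inequality $g(C)\le g_\fint(C)$ holds because $P_1\subset Q$ gives $G_\fint(C)\subset G(C)$; finitely atomic measures in $Q$ integrate $\bp$ because $\bp$ is finite. For $g_\fint(C)\le g(C)$, take $\mu\in G(C)$ with $\int f\dd\mu<\infty$. Apply the Richter--Tchakaloff theorem to the vector map $(\bp,f)$ restricted to $\{f<\infty\}$, which has full $\mu$-measure. This produces a finitely atomic $\nu$, supported in $\{f<\infty\}$, with $\int\bp\dd\nu=\int\bp\dd\mu\in C$ and $\int f\dd\nu=\int f\dd\mu$. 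The function $f$ is $\mu$-integrable since it is bounded below and has finite integral. Then $\nu\in G_\fint(C)$, which gives $g_\fint(C)\le g(C)$. The inequality $g(C)\le g_\extt(C)$ is trivial.

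\emph{Main step: $g_\extt(C)\le g_\fint(C)$.} This is where facial geometry enters. The minimizer $\nu$ built in the first part lies in $\ext G_\fint(C)$. By Corollary~\ref{cor:fin-atomic-ext}~(1), since $P_1\subset Q$, the set $G_\fint(C)$ is a face of $G(C)$. Therefore $\ext G_\fint(C)\subset\ext G(C)$, and every $\mu\in G_\fint(C)$ is dominated in $f$-value by an extreme point of $G(C)$. Hence $g_\extt(C)\le g_\fint(C)$, and all four values coincide.

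The delicate points are the handling of $+\infty$ values of $f$ and the application of Richter--Tchakaloff. The version I invoke requires only measurability and integrability, with no topology, as in \cite{schmuedgen17,shapiro01}. It must be applied to the $(d+1)$ functions $(\bp,f)$ so that the value of $f$ is preserved exactly rather than merely bounded.
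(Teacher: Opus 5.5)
Your proof is correct and follows the paper's argument essentially step by step. You use Bauer's theorem on the compact constrained simplex spanned by the atoms of $\mu$, the fact that this set is a face of $G_\fint(C)$ together with Corollary~\ref{cor:fin-atomic-ext}~(2) and~(1), and the Richter--Tchakaloff theorem for $g_\fint(C)\le g(C)$. Your explicit handling of the value $+\infty$ (restricting to $\{f<\infty\}$) is a small refinement that the paper leaves implicit; just make sure the constant function $1$ is among the functions in your Richter--Tchakaloff application (or use the version for probability measures), so that $\nu$ really lies in $P\!_\fint$.
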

\begin{proof}
See Figure~\ref{fig:flowchart2} for a flowchart of the proof.
First, we focus on finitely atomic measures. Let $\mu\in G_\fint(C)$ be a finitely 
atomic measure concentrated on $\{x_1,\dots,x_k\}\subset X$. Since $C$ is closed 
and the convex hull $F'$ of the point measures at $x_1,\dots,x_k$ is compact, it 
follows that the constrained set $F:=F'\cap G_\fint(C)$ is compact. Bauer's maximum 
principle then shows that $\tilde\mu\mapsto\int f\dd\tilde\mu$ attains a minimum on 
$F$ at an extreme point $\nu$ of $F$. Note that $\nu$ is an extreme point of 
$G_\fint(C)$, because $F'$ is a face of the simplex $P$ and so $F$ is a face of 
$G_\fint(C)$. Hence
\begin{equation}\label{eq:bauer}\textstyle
g_{\ext,\fint}(C)
:=\inf\{ \int f\dd\mu : \mu\in\ext G_\fint(C)\}
\leq g_\fint(C).
\end{equation}
The inclusion $\ext G_\fint(C)\subset G_\ait(C)$, proved in 
Corollary~\ref{cor:fin-atomic-ext}~(2), implies $g_\ait(C)\leq g_{\ext,\fint}(C)$. 
Hence $g_\ait(C)\leq g_\fint(C)$ by \eqref{eq:bauer}, hence $g_\ait(C)=g_\fint(C)$.
\par
Secondly, to study arbitrary measures in $P$, we assume that $\bp$ and $f$ are 
measurable and $f$ is bounded below. Let $\bq\in\R^d$ and $\mu\in G(\bq)$. Then 
$\mu\in Q$ and $\bq=\int\bp\dd\mu$. If $c:=\int f\dd\mu$ is finite, then the 
Richter-Tchakaloff theorem \cite[Theorem 1.24]{schmuedgen17}, \cite[Lemma~3.1]{shapiro01} 
shows that there is a finitely atomic measure $\nu\in P\!_\fint$ such that 
$\bq=\int\bp\dd\nu$ and $c=\int f\dd\nu$. Hence, 
\begin{align*}
g_\fint(\bq)
&=\textstyle  
 \inf\{\tilde c\in\R\mid \exists \nu\in P\!_\fint
 : \bq=\int\bp\dd\nu, \tilde c=\int f\dd\nu\}\\
&\leq\textstyle  
 \inf\{\tilde c\in\R \mid \exists \mu\in Q
 : \bq=\int\bp\dd\mu, \tilde c=\int f\dd\mu\}
 =g(\bq).
\end{align*} 
The converse inequality $g_\fint(\bq)\geq g(\bq)$ holds, as $P_1\subset Q$ 
implies $P\!_\fint\subset Q$, so $g(\bq)=g_\fint(\bq)$. Taking the infimum 
over all points $\bq\in C$ shows $g(C)=g_\fint(C)$.
\par
Now, the set $G_\fint(C)$ of finitely atomic measures is a face of $G(C)$ by 
Corollary~\ref{cor:fin-atomic-ext}~(1) because $P_1\subset Q$. This shows 
$\ext G_\fint(C)\subset\ext G(C)$ and $g_\extt(C)\leq g_{\ext,\fint}(C)$. 
Hence $g_\extt(C)\leq g_\fint(C)$ by \eqref{eq:bauer}. Finally, 
\[
g_\extt(C)
\leq g_\fint(C)
= g(C)
\leq g_\extt(C).
\qedhere
\]
\end{proof}
The extreme points of the constraint set $G(C)$ are finitely atomic measures that have the 
affine independence property if $Q=R$, by Corollary~\ref{cor:Borel-faces}~(3). Whereas 
$\ext G(C)\subset G_\ait(C)$ can be false under the assumptions of Theorem~\ref{thm:RTR}, 
we still have $g_\ait(C)=g_\extt(C)$.
\par
\begin{remark}[Summary]\label{rem:our-contrib-opti}
Theorem~\ref{thm:WW-integral} has been proven in the corollary in 
\cite[p.~138]{weizsaecker-winkler80} and in \cite[Theorem~3.2]{winkler88}. We generalize
it in Theorem~\ref{thm:RTR} by replacing $Q=R$ with the weaker assumption that $Q$ 
includes the point measures $P_1$. The main ideas are summarized in 
Figure~\ref{fig:flowchart2}:
\begin{enumerate}
\item
$g_\fint(C)\leq g(C)$ by the Richter-Tchakaloff theorem,
\item
$g_\ait(C)\leq g_\fint(C)$ by Bauer's theorem and the necessary
condition,
\item
$g_\extt(C)\leq g_\fint(C)$ by Bauer's theorem and facial geometry.
\end{enumerate}
It is tempting to prove Theorem~\ref{thm:RTR} using (1) and (2) while replacing (3) 
with $g_\extt(C)\leq g_\ait(C)$, which is implied by the sufficient condition 
$G_\ait(C)\subset\ext G(C)$ in Theorem~\ref{thm:moment-char-fin}. However, the 
sufficient condition assumes that $C$ is a singleton. This underlines the advantage 
of the facial geometry in (3), which allows $C\subset\R^d$ to be an arbitrary closed 
convex set.
\end{remark}
%
%
\subsection{Moment problems and convex envelopes}
\label{sec:Dacorogna-MP}
We consider parametrized families of generalized moment problems whose constraints are 
defined by points (Definition~\ref{def:moment-problems}). We use the symbols of the 
optimal values also for the optimal value functions, e.g.\
$g_\fint\colon\R^d\to[-\infty,+\infty]$, $\bq\mapsto g_\fint(\bq)=g_\fint(\{\bq\})$. 
We prove that the optimal value functions are generalized convex envelopes. An example 
is the polyconvex envelope in elasticity theory \cite{ball76,dacorogna08}.
\par
\begin{definition}
Let $\bp\colon X\to\R^d$ and $f\colon X\to(-\infty,+\infty]$ be arbitrary maps.
\begin{enumerate}
\item
We say that $f$ is \emph{$\bp$-convex} if $f=h\circ\bp$ is the composition of $\bp$ 
followed by  a convex function $h\colon\R^d\to(-\infty,+\infty]$.
\item
We define the \emph{$\bp$-convex envelope} $\env_\bp f\colon X\to[-\infty,+\infty]$
of $f$ pointwise by 
\[
\env_\bp f(x) := \sup\{h(x): h\leq f, \text{$h$ is $\bp$-convex\,}\},
\qquad x\in X.
\]
As before, inequalities of functions are understood pointwise.
\end{enumerate}
\end{definition}
\begin{example}[Polyconvexity]\label{exa:polyconvexity}
Let $X:=\R^{m\times n}$, $d:=\binom{m+n}{m}-1$, and let $\bp\colon\R^{m\times n}\to\R^d$ be 
the map that returns all minors of an $m\times n$-matrix. Then the $\bp$-convex functions 
are the \emph{polyconvex} functions introduced by J.\,M.~Ball \cite{ball76}. The $\bp$-convex 
envelope of $f\colon\R^{m\times n}\to(-\infty,+\infty]$ is the \emph{polyconvex envelope} 
\cite{dacorogna08}, which is frequently denoted by $Pf:=\env_\bp f$. 
\par 
Polyconvex energy densities play a central role in nonlinear elasticity because they allow 
one to impose physically meaningful assumptions while guaranteeing mathematical 
well-posedness. One aims at minimizing the stored energy $\int_{\Omega}f(\nabla y(x))\dd x$
over admissible deformations $y\colon\Omega\to\R^3$, given a reference configuration 
$\Omega\subset\R^3$, an energy density $f$, and boundary conditions. For example, an 
energy-minimizing deformation exists if $f$ is polyconvex and satisfies certain growth 
conditions at infinity, see for example Section~8.4.2 \cite{dacorogna08}. A physically 
meaningful constraint is to penalize small determinants of  $\nabla y$ with a high energy density 
(and exclude negative determinants of $\nabla y$) to preserve the orientation. When $f$ is not polyconvex, its 
polyconvex envelope $Pf$ provides the closest relaxation that still respects physical 
constraints. The convex envelope is too rigid for modeling microstructure, already because 
it cannot have multiple local minima. This makes the polyconvex envelope an effective tool 
for modeling microstructure and for ensuring mathematically robust solutions in nonlinear 
elasticity \cite{ciarlet21,kruzik-roubicek19}.
\end{example}
\begin{example}[Electro-magneto-elasticity]\label{exa:Silhavy}
Polyconvexity can be generalized to the so-called $\cA$-polyconvexity. Roughly speaking, it is an analog of classical polyconvexity, but the gradients are replaced by a more general first-order differential operator $\cA$, and we look at maps satisfying $\cA u=0$. In fact, gradients can be described by $\cA=\mathrm{curl}$, because $\mathrm{curl}\,\nabla $=0.  
The notion of $\cA$-polyconvexity was used in \cite{silhavy18} where the author discusses variational problems in electromagnetism.
\end{example}
\begin{proposition}\label{pro:Dacorogna}
Let $\bp\colon X\to\R^d$ and $f\colon X\to(-\infty,+\infty]$ be arbitrary maps. The optimal 
value function $g_\fint\colon\R^d\to[-\infty,+\infty]$ is convex and satisfies $g_\ait=g_\fint$. 
If $f$ is bounded below by a $\bp$-convex function, then $g_\fint>-\infty$ pointwise and 
$\env_\bp f=g_\fint\circ\bp$. 
\end{proposition}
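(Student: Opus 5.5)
Three things have to be shown: convexity of $g_\fint$, the identity $g_\ait=g_\fint$, and the envelope formula.

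\emph{Convexity.} We work with the discrete $\sigma$-algebra, so every finitely atomic measure integrates $\bp$ and $f$. Let $\bq,\bq'\in\R^d$ and $\lambda\in(0,1)$. If $g_\fint(\bq)$ or $g_\fint(\bq')$ equals $+\infty$, there is nothing to prove. Otherwise take real numbers $t>g_\fint(\bq)$ and $t'>g_\fint(\bq')$, and choose $\mu\in G_\fint(\bq)$, $\mu'\in G_\fint(\bq')$ with $\int f\dd\mu<t$ and $\int f\dd\mu'<t'$. The measure $(1-\lambda)\mu+\lambda\mu'$ is again finitely atomic. By linearity of the integral it lies in $G_\fint((1-\lambda)\bq+\lambda\bq')$ and has cost below $(1-\lambda)t+\lambda t'$. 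This is convexity of the epigraph-type inequality, and it covers the value $-\infty$ as well.

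\emph{The identity $g_\ait=g_\fint$.} This is the first assertion of Theorem~\ref{thm:RTR} applied to the singleton $C=\{\bq\}$, which is closed. That theorem is stated for arbitrary maps, and its first part only uses finitely atomic measures, so its hypothesis $P_1\subset Q$ plays no role here. Taking $Q=P$ suffices.

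\emph{The envelope formula.} Suppose $f\ge h_0\circ\bp$ with $h_0$ convex. I prove the two inequalities separately.

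\begin{enumerate}
\item \emph{$g_\fint\circ\bp\le f$.} For each $x\in X$, the point measure $\delta_x$ lies in $G_\fint(\bp(x))$, so $g_\fint(\bp(x))\le f(x)$.
\item \emph{$g_\fint>-\infty$ and $g_\fint\circ\bp$ is an admissible competitor.} Let $\mu=\sum\lambda_i\delta_{x_i}\in G_\fint(\bq)$. By Jensen for finite sums,
\[
\textstyle\int f\dd\mu\ \ge\ \sum\lambda_i h_0(\bp(x_i))\ \ge\ h_0(\bq)>-\infty .
\]
Hence $g_\fint\ge h_0>-\infty$ pointwise. Together with convexity, $g_\fint$ is a convex function $\R^d\to(-\infty,+\infty]$. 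By step 1, $g_\fint\circ\bp$ is then a $\bp$-convex minorant of $f$, so $g_\fint\circ\bp\le\env_\bp f$.
\item \emph{$\env_\bp f\le g_\fint\circ\bp$.} Let $h\circ\bp\le f$ with $h$ convex, and let $\mu$ be as in step 2 with $\bq=\bp(x)$. Jensen again gives
\[
\textstyle h(\bp(x))\ \le\ \sum\lambda_i h(\bp(x_i))\ \le\ \int f\dd\mu .
\]
Taking the infimum over $\mu\in G_\fint(\bp(x))$ gives $h(\bp(x))\le g_\fint(\bp(x))$. The supremum over all admissible $h$ then yields the inequality.
\end{enumerate}

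The main point requiring care is the $\pm\infty$ bookkeeping in the convexity step and in Jensen. Since $h$ may take the value $+\infty$, each inequality must be checked to remain valid with extended-real arithmetic. The order of the argument also matters: the lower bound $g_\fint>-\infty$ must be established before $g_\fint\circ\bp$ can be called $\bp$-convex, because a $\bp$-convex function takes values in $(-\infty,+\infty]$.
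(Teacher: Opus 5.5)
Your proof is correct and follows the paper's argument. You get $g_\ait=g_\fint$ from the first part of Theorem~\ref{thm:RTR} with closed singletons, and you get the envelope identity from point measures ($g_\fint\circ\bp\le f$) together with Jensen's inequality, used once for the lower bound $g_\fint\ge h>-\infty$ and once for the reverse inequality. The only difference is in the convexity step: you check it directly by mixing two near-optimal finitely atomic measures, whereas the paper restricts to measures of finite cost and cites Rockafellar's result that an infimal projection is convex, which rests on essentially the same mixing argument.
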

\begin{proof}
Note that $\mu\mapsto\int f\dd\mu$ and $\mu\mapsto\int p_i\dd\mu$, $i=1,\dots,d$ 
are real affine maps on the convex set 
$\tilde P\!_\fint:=\{\mu\in P\!_\fint:\int f\dd\mu<\infty\}$. Hence, 
$\tilde g_\fint\colon \R^d\to[-\infty,+\infty]$,
\[\textstyle
\tilde g_\fint(\bq):=
\inf\{\int f\dd\mu : \bq=\int\bp\dd\mu, \mu\in\tilde P\!_\fint\},
\qquad \bq\in\R^d,
\]
is convex by Theorem~1 and Example~1 in \cite{rockafellar74}. And so is 
$g_\fint=\tilde g_\fint$. The equality $g_\ait=g_\fint$ is shown in
Theorem~\ref{thm:RTR} as the singletons $\{\bq\}$ are closed for all
$\bq\in\R^d$.
\par
Let $f$ be bounded below by a $\bp$-convex function, say $h\circ\bp\leq f$ and
$h\colon\R^d\to(-\infty,+\infty]$ is convex. Jensen's inequality 
\cite[Theorem~4.3]{rockafellar70} shows that for every $\mu\in P\!_\fint$ we have
\[\textstyle
h(\int\bp\dd\mu)
\leq \int h\circ\bp\dd\mu
\leq \int f\dd\mu.
\]
Minimizing for a fixed $\bq\in\R^d$ the right-hand side of $h(\bq)\leq\int f\dd\mu$
over all $\mu\in P\!_\fint$ that satisfy $\bq=\int\bp\dd\mu$, we get 
$-\infty<h\leq g_\fint$ pointwise. For every $x\in X$, the constraint 
$\bp(x)=\int\bp\dd\mu$ is satisfied by the point measure $\mu=\delta_x$, hence
\[\textstyle
h\circ\bp
\leq g_\fint\circ\bp
\leq f.
\]
Thus, $g_\fint\circ\bp$ is a $\bp$-convex lower bound to $f$, so 
$g_\fint\circ\bp\leq \env_\bp f$. Maximizing for a fixed $x\in X$ the left-hand 
side of $h\circ\bp(x)\leq g_\fint\circ\bp(x)$ over all convex functions 
$h\colon\R^d\to(-\infty,+\infty]$ that satisfy $h\circ\bp\leq f$, we conclude 
$\env_\bp f\leq g_\fint\circ\bp$.
\end{proof}
\begin{example}[Convex envelope]\label{exa:convex-env}
Let $X:=\R^d$ and let $\bp\colon\R^d\to\R^d$ be the identity map. Then the $\bp$-convex 
functions are simply the convex functions $\R^d\to(-\infty,+\infty]$. The $\bp$-convex 
envelope of $f\colon\R^d\to(-\infty,+\infty]$ is simply the \emph{convex envelope} 
\cite{dacorogna08}, which is frequently denoted by $Cf:=\env_\bp f$. We have 
$C f\leq\conv f$, where $\conv f\colon\R^d\to[-\infty,+\infty]$ is the greatest convex 
function below $f$ (the \emph{convex hull} of $f$). It is well known that
\begin{equation}\label{eq:convex-hull}\textstyle
\conv f(x)=\inf\{\sum_{i=1}^{d+1}\lambda_if(x_i) : x=\sum_{i=1}^{d+1}\lambda_ix_i\},
\qquad x\in\R^d,
\end{equation}
where the infimum is taken over all expressions of $x$ as a convex combination of $d+1$ points. 
The formula \eqref{eq:convex-hull} is also valid if one takes only the combinations in which 
the points $x_i$ whose weights $\lambda_i$ are strictly positive are affinely independent 
\cite[Corollary~17.1.5]{rockafellar70}. If $f$ is bounded below by a convex function 
$\R^d\to(-\infty,+\infty]$, then $C f=\conv f$ and the formula \eqref{eq:convex-hull} and its 
affine independence statement follow immediately from Proposition~\ref{pro:Dacorogna}.
\end{example}
\begin{corollary}\label{cor:Dacorogna}
Let $P_1\subset Q$. If $\bp$ and $f$ are measurable with respect to $\cF$, and $f$ is bounded 
below by a real constant, then $g=g_\fint=g_\ait=g_\extt$ and $\env_\bp f=g_\fint\circ\bp$.
\end{corollary}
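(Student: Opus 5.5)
The corollary combines Theorem~\ref{thm:RTR}, applied pointwise to singletons, with Proposition~\ref{pro:Dacorogna}. The plan has two parts: first the equality of the four optimal value functions, then the envelope identity.

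For the first part, fix $\bq\in\R^d$ and set $C:=\{\bq\}$. This set is closed and convex. The hypotheses of Theorem~\ref{thm:RTR} then hold: $P_1\subset Q$, $\bp$ and $f$ are $\cF$-measurable, and $f$ is bounded below by a real constant. The theorem gives $g(\bq)=g_\fint(\bq)=g_\ait(\bq)=g_\extt(\bq)$. Since $\bq$ is arbitrary, the functions $g,g_\fint,g_\ait,g_\extt\colon\R^d\to[-\infty,+\infty]$ coincide. Two minor points need care here. First, all four optimization problems are well defined, since $f$ is bounded below. Second, if $G(\bq)$ is empty, every value is $+\infty$; this is consistent with the theorem, whose chain of inequalities covers the empty case.

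For the envelope identity, observe that a real constant $c\le f$ is itself $\bp$-convex: take $h\equiv c$, which is convex $\R^d\to(-\infty,+\infty]$, so that $c=h\circ\bp$. Hence $f$ is bounded below by a $\bp$-convex function. Proposition~\ref{pro:Dacorogna} holds for arbitrary maps and therefore applies here. It yields $g_\fint>-\infty$ and $\env_\bp f=g_\fint\circ\bp$. Note that $g_\fint$ is defined purely in terms of finitely atomic measures, independently of $Q$ and $\cF$. The definitions only involve integrals against finitely atomic measures, so measurability plays no role in this step.

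The only step I expect to require thought is the compatibility of settings. Theorem~\ref{thm:RTR} and Definition~\ref{def:moment-problems} use the given $\sigma$-algebra $\cF$, while Proposition~\ref{pro:Dacorogna} tacitly uses the discrete one. I would resolve this by noting that $G_\fint(\bq)$ and $\int f\dd\mu$, for $\mu\in P_\fint$, do not depend on $\cF$ because $\cF$ contains all singletons. Indeed, a finitely atomic measure on $\cF$ extends uniquely to a finitely atomic measure on the power set with the same integrals. So the two definitions of $g_\fint$ agree, and the corollary follows by combining the two parts.
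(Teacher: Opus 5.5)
Your proposal is correct and follows the paper's proof, which simply combines Theorem~\ref{thm:RTR}, applied to the closed singletons $C=\{\bq\}$, with Proposition~\ref{pro:Dacorogna}, using that a real constant lower bound on $f$ is $\bp$-convex. Your extra remarks on the empty constraint set and on $g_\fint$ not depending on the choice of $\sigma$-algebra spell out what the paper leaves implicit.
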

\begin{proof}
This follows immediately from Proposition~\ref{pro:Dacorogna} and Theorem~\ref{thm:RTR}.
\end{proof}
\begin{example}[Old and new representations of the polyconvex envelope]%
\label{exa:polyconvex-envelope}
Let $\bp\colon\R^{m\times n}\to\R^d$ be the minor map of Example~\ref{exa:polyconvexity}. 
Dacorogna has shown that if $f\colon\R^{m\times n}\to(-\infty,+\infty]$ is bounded below 
by a polyconvex function, then the polyconvex envelope of $f$ is 
\begin{equation}\label{eq:poly-envelope}\textstyle
P f(M)
=\inf\{\sum_{i=1}^{d+1}\lambda_if(M_i) : \bp(M)=\sum_{i=1}^{d+1}\lambda_i\bp(M_i)\},
\qquad M\in\R^{m\times n},
\end{equation}
where the infimum is taken over all tuples $(M_1,\ldots,M_{d+1})$ and 
$(\lambda_1,\dots,\lambda_{d+1})$ of $m\times n$ matrices and nonnegative real weights 
that sum to one, respectively---in other words, over all expressions of $M$ as a convex 
combination of $d+1$ matrices that satisfy additional nonlinear constraints (linear in 
the minors) \cite[Theorem~6.8]{dacorogna08}. Because of these additional constraints,
the convex envelope of Example~\ref{exa:convex-env} satisfies $C f\leq P f$. Formula
\eqref{eq:poly-envelope} reads $Pf=g_{d+1}\circ\bp$, where 
\begin{equation}\label{eq:gd1}\textstyle
g_{d+1}(\bq):=
\inf\{ \int f\dd\mu : \text{$\mu\in G_\fint(\bq)$, $\mu$ is $k$-atomic for some $k\leq d+1$}\,\},
\qquad \bq\in\R^d.
\end{equation}
Since $g_\fint\leq g_{d+1}\leq g_\ait$ is clear, we retrieve $Pf=g_{d+1}\circ\bp$ from 
Proposition~\ref{pro:Dacorogna} above. The expression $P f=g\circ\bp$ of 
Corollary~\ref{cor:Dacorogna}, where $g$ is the optimal value function \eqref{eq:PM} on Borel 
measures, is also well known, see e.g.\ \cite{fantuzzi-etal26} and the references therein.
\par
Furthermore, Corollary~\ref{cor:Dacorogna} shows $P f=g_\extt\circ\bp=g_\ait\circ\bp$. Not 
only are the optimal value functions $g_\extt=g_\ait$ equal, but also the constraint sets 
$\ext G(\bq)=G_\ait(\bq)$, $\bq\in\R^d$, by Corollary~\ref{cor:Borel-faces}~(4). To the
best of our knowledge, these assertions are novel, although they follow immediately from 
Weizs\"{a}cker and Winkler's Theorem~\ref{thm:WW-integral} and Dacorogna's formula 
\eqref{eq:poly-envelope}. 
\end{example}
\begin{remark}[Summary]\label{rem:envelopes}
We have shown that optimal value functions of generalized moment problems are generalized 
convex envelopes (Proposition~\ref{pro:Dacorogna} and Corollary~\ref{cor:Dacorogna}). We 
are unaware of literature on this topic aside from the references in 
Example~\ref{exa:convex-env} and Example~\ref{exa:polyconvex-envelope}. Regarding the 
polyconvex envelope, the representations $P f=g_\extt\circ\bp=g_\ait\circ\bp$ using optimal 
value functions on extreme points and on finitely atomic measures that have the affine 
independence property seem to be new. The representations $P f=g_{d+1}\circ\bp=g\circ\bp$ 
with optimal value functions on finitely atomic measures and on Borel measures are well 
known \cite{dacorogna08,fantuzzi-etal26}.
\end{remark}
%
%
\section{Conclusion}
\label{sec:conclusion}
Restricting generalized moment problems to extreme points or to finitely atomic
measures is not just appealing from a geometric perspective. It can also help 
improve algorithms. For example, the polyconvex envelope is pointwise the 
infimum of a generalized moment problem on finitely atomic probability measures on 
$\R^{m\times n}$ (Example~\ref{exa:polyconvex-envelope}). Computing the polyconvex 
envelope is an important, hard problem in material science, for which approximate 
algorithms have been devised \cite{bartels05} that place the atoms on a discrete 
grid in $\R^{m\times n}$. Our results show that the generalized moment problem can 
be restricted to finitely atomic measures that have the affine independence property, 
without increasing the optimal value. This may accelerate the algorithm. A 
different numerical approach to the polyconvex envelope, without discretization, is 
the moment sum-of-squares hierarchy \cite{fantuzzi-etal26}. This is a standard 
technique of polynomial optimization, whose starting point is a generalized moment 
problem on Borel probability measures on $\R^{m\times n}$.
\par
The step from finitely to countably infinitely many moment constraints is substantial
as the Hamburger moment problem shows \cite{schmuedgen17}. If a Borel probability 
measure $\mu$ on $\R$ has compact support and a moment sequence 
$(\int_{-\infty}^\infty x^i\dd\mu(x))_{i\in\N}$ of real numbers, then the constraint 
set is $\{\mu\}$. Hence, extreme points of the constraint sets can be absolutely 
or singular continuous with respect to the Lebesgue measure \cite[pp.~22f]{reed-simon80}.
They are not just countable sums of point measures. It would be interesting to see 
whether the embedding of Theorem~\ref{thm:ext} could give new insights into extreme 
points $\nu$. For $d<\infty$ many moment constraints, the affine map $P^\bp\to\R^d$, 
$\mu\mapsto \int\bp\dd\mu$ restricts to the embedding $F_{P^\bp}(\nu)\to\R^d$, which 
we exploit in Proposition~\ref{pro:ext-fin}~(1) using Minkowski's and  
Carath{\'e}odory's theorems ($P^\bp$ is the set of Borel probability measures with respect 
to which the constraints $\bp$ are integrable). Similarly, there is also an embedding 
$F_{\tilde P}(\nu)\to \R^\N$ where $\tilde P$ is the set of Borel probability measures 
that have a moment sequence. 
\par  
If one is only interested in necessary conditions for points to be extreme points, 
then Proposition~\ref{pro:ext-fin} is also useful for convex sets that are not 
$(d+1)$-neighborly. An example is the set of quantum states 
\cite{alfsen-shultz01,aubrun-szarek17,bengtsson-zyczkowski17}, the positive operators 
of trace one on a separable Hilbert space, which is linearly compact but not 
two-neighborly. Its extreme points are the pure states, being orthogonal projections 
onto one-dimensional subspaces. Considering $d<\infty$ many affine constraints, the 
extreme points are convex combinations of at most $d+1$ pure states having the affine 
independence property. Remarkably, the extreme points of an energy-constrained set of 
states are pure and restricting a lower-bounded lower semicontinuous concave map 
to them preserves the infimum \cite{weis-shirokov21}, because the set of quantum states 
is $\mu$-compact \cite{holevo-shirokov06,protasov-shirokov2009}. 
\par
%
%
\section*{Acknowledgments}
SW thanks Maria Infusino for valuable conversations on moment problems and 
literature on Radon measures \cite{schwartz73}. He is grateful to Heinrich 
v.\ Weizs\"{a}cker for his helpful letter on integral representations of 
Radon measures under moment constraints. This work was funded by the 
European Union under the project ROBOPROX (reg.\ no.\ CZ.02.01.01/00/22\_008/0004590).
\par
%
%

%
%
\end{document}